\documentclass[12pt]{article}
\usepackage [a4paper,left=2.5cm,right=2.5cm,top=2.5cm,bottom=2.5cm]{geometry}               
\usepackage{amssymb,amsmath,amsrefs,amsthm}
\usepackage{authblk}
\usepackage{abstract}

\providecommand{\keywords}[1]{\textbf{Keywords:} #1}
\usepackage{enumerate}
\setcounter{MaxMatrixCols}{20}
\usepackage{color} 
\usepackage{graphicx}
\usepackage{setspace}
\usepackage{csquotes} 
\usepackage{url}
\usepackage{hyperref}
\usepackage{booktabs}
\usepackage{arydshln}
\usepackage{multirow}
\setlength{\marginparwidth}{2cm}
\usepackage{todonotes}

\hypersetup{colorlinks=true,
            linkcolor=blue,
            anchorcolor=blue,
            citecolor=blue}

\usepackage{caption}
\captionsetup[figure]{font=footnotesize}
\captionsetup[figure]{skip=1pt}
\usepackage[final]{changes}

\usepackage{pifont}

\usepackage{subfigure}

\definechangesauthor[name={Reviewer1}, color=blue]{r1}
\definechangesauthor[name={Reviewer2}, color=cyan]{r2}

\newtheorem{theo}{Theorem}
\newtheorem{coro}{Corollary}
\newtheorem{lemma}{Lemma}

\newtheorem{prop}{Proposition}

\theoremstyle{definition} 
\newtheorem{rema}{Remark}

\DeclareGraphicsRule{.tif}{png}{.png}{`convert #1 `dirname #1`/`basename #1 .tif`.png}

\title{Symmetric Central Configurations in the Concave 4-Body Problem with Two Pairs of Equal Masses}
\author[1]{Yangshanshan Liu}
\affil[1]{Chern Institute of Mathematics, Nankai University, Tianjin, 300071, China, lyss6133@nankai.edu.cn}
\author[2]{Zhifu Xie\thanks{The corresponding author is partially supported by Wright W. and Annie Rea Cross Endowment Funds at the University of Southern Mississippi and by National Science Foundation (Award Number (FAIN): 2447675).}\thanks{$^*$Corresponding author: Zhifu Xie}}
\affil[2]{School of Mathematics and Natural Science, The University of Southern Mississippi, Hattiesburg, MS 39406, USA, Zhifu.Xie@usm.edu}

%

\date{March 26, 2026 }  
\begin{document}
\maketitle
\begin{abstract}
We establish the existence of a single-parameter family of the concave kite central configurations in the 4-body problem with two pairs of equal masses. 
In such configurations, one pair of masses must lie on the base of an isosceles triangle, and the other pair on its symmetric axis with one mass positioned inside the triangle formed by the other three. Using a rigorous computer-assisted analytical approach, we prove that for any non-negative mass ratio, the number of such configurations is either zero, one, or two, thereby providing a complete classification of this family. Furthermore, we show that the unique configuration corresponding to a specific mass ratio is a fold-type bifurcation point within the reduced subspace. We also give a clear and complete bifurcation picture for both symmetric and asymmetric cases of this concave type across the entire planar 4-body configuration space.
\end{abstract}

\keywords{central configurations, concave central configurations, Newtonian 4-body problem, Krawczyk operator, bifurcations}

{MSC: Primary: 70F10, 70F15; Secondary: 37M20}
\tableofcontents
\section{Introduction}

The classical $n$-body problem concerns the study of $n$ particles with positive masses $m_i$ and positions $q_i=(q_{i1},\cdots,q_{id})^T \in \mathbb{R}^{d}\,(i=1,\cdots,n;\, d=2,3)$ governed by Newton's laws of motion
\begin{equation}\label{newton}
	m_i\ddot q_{ij} =\frac{\partial U}{\partial q_{ij}},\quad i=1,\cdots,n,\,j=1,\cdots,d,
\end{equation}
where 
\begin{equation*}
	U=\sum\limits_{i<j}\frac{m_im_j}{\vert q_j-q_i \vert}=\sum\limits_{i<j}\frac{m_im_j}{r_{ij}}
\end{equation*}
is the Newtonian potential and $r_{ij}=|q_i-q_j|$ is the mutual distance between the bodies $m_i$ and $m_j$. 

This set of equations admits a class of particular solutions called homographic solutions, namely, each body moves on a similar Keplerian orbit, and the configuration remains the same up to symmetry. This particular position $q=(q_1,\cdots,q_n)$ of the $n$ bodies is called a central configuration and a mathematical expression is shown as follows 
\begin{equation}\label{cc}
	-\lambda m_j(q_j-c)=\frac{\partial U(q)}{\partial q_j}=\sum_{i\neq j,i=1}^{n}\frac{m_im_j(q_i-q_j)}{r_{ij}^3},
\end{equation}
where $c=\frac{1}{\sum m_i}\sum \limits_{i=1}^n m_iq_i$ is the center of mass and $\lambda$ the configuration constant. The equations \eqref{cc} are invariant under translation, dilation, and rotation, and we say two central configurations are equivalent if they can be transformed from one to another by combining the above three symmetric operations. 
 
The first five central configurations are found by Euler and Lagrange, corresponding to three collinear central configurations and two equilateral triangle central configurations up to symmetry, respectively. These central configurations are now named after them.
Similar questions are raised for the general case, which is known as the finiteness conjecture \cites{chazy1918, smale1998, wintner1941}: For any given $n$ positive masses, the number of equivalent classes of central configurations of the $n$-body problem is finite. 
The conjecture holds for $n=4$ \cite{hampton2006}, and for $n=5$, only generic results have been derived \cites{albouy2012,hampton2011}. For $n=6$, despite the new progress that has been made in \cite{chang2024}, it remains considerably distant from achieving the generic result.
Less is known for $n\geq7$. It also holds for some very particular shapes, such as the collinear central configurations \cite{moulton1910} for $n$ bodies, and some equal mass cases \cites{albouy1996,lee2009, moczurad2019}.

A non-collinear central configuration of the $n$-body problem is \textit{convex}, if each body lies on the boundary of the convex hull formed by all the $n$ bodies. It is \textit{strictly convex} if each body lies in the exterior of the convex hull formed by the other $n-1$ bodies. It is \textit{concave} if it is not strictly convex, and it is \textit{strictly concave} if it is not convex.
In the 4-body problem, except for the collinear cases, there is no central configuration that is convex (resp., concave) but not strictly convex (resp., concave), according to the Perpendicular Bisector Theorem (or Lemma \ref{perpendicular} in this paper). 
Despite a rich body of research, a complete and precise classification of the 4-body central configurations remains unavailable.
The convex case, however, is comparatively better understood. MacMillan and Bartky established the existence of a convex central configuration for any four ordered masses \cite{macmillan1932}, with an alternative proof provided later by Xia \cite{xia2004}. Regarding the uniqueness aspect, while Simó and Yoccoz addressed the problem, a rigorous mathematical proof is still lacking, with support limited to numerical evidence. The existence and the uniqueness conjecture has been confirmed for several specific convex shapes \cites{cors2012, santoprete2021a, santoprete2021b, xie2012}. Sun, Xie, and You \cite{sun2023} recently made some progress, giving a proof for masses sufficiently bounded away from zero. 
The characterization of the convex case with partial equal masses has been thoroughly established \cites{albouy2008,fernandes2017,perez2007}, and Corbera, Cors, and Roberts have achieved a complete classification of the convex 4-body central configurations in \cite{corbera2019}. 

For the concave 4-body central configurations, the classification remains unresolved, despite the proof of their existence \cite{hampton2002}. The solutions for the concave shapes with some partial equal masses were mentioned or discussed in \cites{martha2013,corbera2014,long2003,rusu2016,shi2010}. 
In addition, a simple degenerate concave central configuration in the 4-body problem was discovered by Palmore \cite{palmore1975b}, which facilitated studies of central-configuration bifurcations \cites{bernat2009,leandro2003,meyer1988,rusu2016,simo1978}. 

A non-collinear central configuration is called a \textit{kite} shape if it has a symmetry axis passing through two of the four masses. 
If a convex central configuration has two opposite equal masses, it must be a kite \cites{albouy2008,long2002}. Furthermore, it is actually a rhombus according to the study by Bernat, Llibre, and P\'{e}rez-Chavela. For the kite shape with three equal masses, Bernat et al. \cite{bernat2009} gave a full classification. Corbera and Llibre \cite{corbera2014} gave a complete description of the families of central configurations with two pairs of equal masses and two equal masses sufficiently small. They found five of non-collision types and three collision types, including both symmetric and asymmetric cases.
Santoprete characterized the kite shape in terms of mutual distances. Roberts \cite{roberts2025} showed the existence and uniqueness of the convex kite shape, and discussed the corresponding bifurcation and linear stability. 

In this paper, we focus on the concave central configurations of the 4-body problem. 
We present a nice characterization of the kite shape with two pairs of equal masses. 
\begin{prop}\label{proposition1}
Suppose that a concave kite central configuration for the 4-body problem with two pairs of equal masses possesses mass values $1$ and $m$. Then one pair of the equal masses must lie on the base of an isosceles triangle, and the other pair lies on the symmetry axis of the base, with one mass inside the triangle formed by the other three.
\end{prop}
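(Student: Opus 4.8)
The plan is to separate the statement into a geometric part (the shape of the hull) and a mass part (which bodies carry which masses), to settle the geometry first using only concavity and Lemma~\ref{perpendicular}, and then to read off the mass equalities from the component of \eqref{cc} orthogonal to the symmetry axis. After a rigid motion I would place the axis as the $y$-axis, the two bodies on it at $(0,\alpha)$ and $(0,\beta)$ with $\alpha\neq\beta$, and the two remaining bodies (mirror images of each other) at $(\xi,\eta)$ and $(-\xi,\eta)$ with $\xi>0$; call the off-axis masses $m_1,m_2$ and the axial ones $m_3,m_4$ in some order. Concavity means one body lies in the convex hull of the other three, and by Lemma~\ref{perpendicular} it cannot sit on an edge of the corresponding triangle (that would force three bodies to be collinear), so it lies strictly inside. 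An off-axis body cannot be this interior body, since the triangle spanned by the other three is contained in a closed half-plane that does not contain that body; hence the interior body is axial, say the one at $(0,\alpha)$, and the hull is the isosceles triangle with vertices $(\xi,\eta),(-\xi,\eta),(0,\beta)$ — the two off-axis bodies forming its base and the axial body at $(0,\beta)$ its apex. Intersecting this triangle with the $y$-axis shows $\alpha$ lies strictly between $\eta$ and $\beta$; in particular $\alpha\neq\eta$, $\beta\neq\eta$ and, crucially, $\eta\neq\tfrac{\alpha+\beta}{2}$, so the distances from $(\xi,\eta)$ to $(0,\alpha)$ and to $(0,\beta)$ are unequal.

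Next I would project the equations \eqref{cc} for each of the two axial bodies onto the direction orthogonal to the axis. Because the two off-axis bodies are mirror images, their distances to a fixed axial body agree, so the orthogonal component of the force on the axial body at $(0,\gamma)$ reduces to $\xi(m_1-m_2)/r^3$, where $r$ is its common distance to the off-axis pair, while the orthogonal component of $q_k-c$ is simply $-c_\perp$. Writing this identity for both axial bodies and subtracting gives $\xi(m_1-m_2)\bigl(r_{\mathrm{inner}}^{-3}-r_{\mathrm{outer}}^{-3}\bigr)=0$. Since $\lambda\neq 0$ (true for every central configuration), $\xi\neq 0$, and the two distances were shown to differ, this forces $m_1=m_2$: the off-axis bodies carry equal masses. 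As the four masses split into two equal pairs with values $1$ and $m$, the two axial masses $m_3,m_4$ must then also be equal; normalizing the off-axis pair to $1$, the axial pair equals $m$.

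Assembling the two steps yields exactly the asserted configuration: the equal pair on the base of the isosceles triangle is the off-axis pair, the other equal pair lies on the symmetry axis with one body at the apex and the other strictly inside the triangle formed by the other three, as in Figure~\ref{isotri}; fixing the scale $r_{12}=2$ then produces the coordinates \eqref{q1234}. The step I expect to demand the most care is the geometric one, specifically ensuring that the two off-axis-to-axial distances are genuinely distinct before invoking the orthogonal projection: if they coincided, the projected equations would be consistent with $m_1\neq m_2$ and the argument would collapse. The whole scheme therefore hinges on using concavity together with Lemma~\ref{perpendicular} to pin down the vertical ordering of the axial bodies and thereby rule out the coincidence $\eta=\tfrac{\alpha+\beta}{2}$; once that is in place, the remaining computations with \eqref{cc} are short.
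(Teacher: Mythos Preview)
Your argument is correct and is genuinely different from the paper's proof. The paper proceeds by contradiction: it treats the equal-mass case $m=1$ separately by citing Albouy's classification \cite{albouy1996}, and for $m\neq 1$ it assumes one of the ``bad'' mass assignments (e.g.\ $m_1=m_4=1$, $m_2=m_3=m$), substitutes the explicit coordinates \eqref{q1234} into \eqref{originalcceq}, and after algebraic reduction obtains $a=b=2$, contradicting $a<b$; the other bad assignment is dispatched the same way.

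Your route is more conceptual and handles all $m$ (including $m=1$) uniformly, without any external citation. The geometric step --- concavity plus Lemma~\ref{perpendicular} forces the interior body to be axial and pins down the strict ordering of $\alpha$ between $\eta$ and $\beta$, hence $r_\alpha\neq r_\beta$ --- is exactly the right preparation for the projection trick: equating the $x$-components of \eqref{cc} for the two axial bodies gives $\xi(m_1-m_2)(r_\alpha^{-3}-r_\beta^{-3})=0$ directly, with no case split on $m$. (A small remark: the condition $\lambda\neq 0$ is not actually needed here, since $\lambda$ drops out when you subtract the two projected equations.) What the paper's approach buys is that its explicit coordinate computation dovetails immediately with the equations \eqref{l1}--\eqref{l3} used in the rest of Section~\ref{iso2+2}; what your approach buys is a shorter, self-contained argument that makes the role of concavity transparent and avoids appealing to \cite{albouy1996}.
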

One can derive this Proposition from the Dziobek equations for the planar 4-body problem without much difficulty. We also provide a more detailed proof in Section \ref{prop1} under the basic settings in this paper.
Then, we establish analytically in Theorem \ref{theo1} that such concave kite central configurations constitute a single-parameter family. 
Then we employ interval arithmetic and the Krawczyk operator to establish a rigorous, computer-assisted proof for the complete classification of this concave kite configuration, as presented in Theorem \ref{theo2}.

\begin{theo}\label{theo1}
Suppose that the planar 4-body central configuration is a concave kite shape with two pairs of equal masses. Then, one pair of the two must lie on the base of an isosceles triangle, and the other pair lies along the symmetry axis, with one body inside the triangle formed by the other three. Together, these concave kite central configurations constitute a one-parameter family. 
\end{theo}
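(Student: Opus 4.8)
The plan is to read off the geometry from the kite's reflection symmetry together with the two mass equalities, and then to establish the one-parameter structure by a dimension count on the symmetry-reduced form of \eqref{cc}.

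After translating, rotating and scaling, I would place the axis of symmetry on the $y$-axis, with $q_3=(0,s)$ and $q_4=(0,t)$ on it and $q_1=(-a,0)$, $q_2=(a,0)$ mirror images (so $r_{1k}=r_{2k}$ for $k=3,4$). Subtracting the $x$-components of \eqref{cc} for $q_1$ and $q_2$ gives the single identity $(m_1-m_2)\bigl(\lambda-M/r_{12}^{3}\bigr)=0$, $M=\sum m_i$; the alternative $\lambda=M/r_{12}^{3}$ is the two-body Kepler value of the base pair, and I would rule it out against the remaining equations (or via concavity), leaving $m_1=m_2$. If the masses are two \emph{distinct} equal pairs, the mirror pair is one of them, so the on-axis pair $\{m_3,m_4\}$ must be the other; the same identity also excludes the axis running through one body of each value, since that would put unequal masses at the mirror bodies. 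This already gives ``one equal pair on the axis, the other equal pair on the two remaining bodies.''

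Concavity then locates everything. A central configuration has no three collinear bodies (Lemma \ref{perpendicular}), so a concave one has convex hull a triangle with exactly one body strictly inside. The reflection fixes the hull setwise and hence permutes its three vertices; being of order two it fixes a vertex, and ``is a hull vertex'' is reflection-invariant, so the interior body cannot be a mirror body — otherwise its reflection would be interior as well, leaving only two hull vertices. Hence the interior body is on the axis, say $q_4$, the outer three are $q_1,q_2,q_3$, and $|q_3q_1|=|q_3q_2|$ makes them an isosceles triangle whose base $q_1q_2$ carries the (equal-mass) mirror pair. Normalising $a=1$ confines the shape to $s>0$, $0<t<s$.

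It remains to see that these configurations form a single-parameter family. Under the kite symmetry the $8$ scalar equations in \eqref{cc} reduce drastically: the two mirror-body equations coincide, the $x$-component of each on-axis body vanishes identically (here $c_x=0$ because $m_1=m_2$), and one of the remaining $y$-equations is forced by the translation identity $\sum_j\nabla_{q_j}U=0$. Three independent scalar equations survive — e.g. the $x$- and $y$-equations of $q_1$ and the $y$-equation of $q_3$, or two Dziobek/Andoyer relations after eliminating $\lambda$ — in the four unknowns $s,t$, the mass ratio and $\lambda$. Eliminating $\lambda$ leaves two equations in $(s,t,\text{mass ratio})$; solving one of them for the mass ratio as an explicit function of the shape collapses the system to a single scalar equation $\Phi(s,t)=0$ on $s>0$, $0<t<s$, whose zero set, by the implicit function theorem where $\nabla\Phi\neq0$, is a smooth curve — the asserted family, with the mass ratio recovered along it. The routine parts are the symmetry reduction and the hull argument; the real work is choosing coordinates (the on-axis heights or suitable angles) in which the cubed distances $r_{13}=r_{23}$, $r_{14}=r_{24}$, $r_{34}=s-t$ eliminate cleanly and disposing of the exceptional value $\lambda=M/r_{12}^{3}$, while the points where $\nabla\Phi$ degenerates — the fold-type bifurcations of Theorem \ref{theo2} — are precisely what makes the global count in the sequel require computer assistance.
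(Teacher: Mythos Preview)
Your reflection/convex-hull argument for the mass placement is cleaner in spirit than the paper's route (Proposition~\ref{proposition1}), which simply substitutes the off-axis assignment $m_1=m_4=1$, $m_2=m_3=m$ into \eqref{originalcceq}, obtains $\lambda=(m+1)/4$, and then forces $a=b=2$, a contradiction. Your version still owes the exclusion of $\lambda=M/r_{12}^{3}$; you flag it as ``real work'' but do not carry it out, and it does not fall out of concavity alone.

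The genuine gap is the ``single-parameter family'' step. A dimension count plus the implicit function theorem tells you only that the zero set of $\Phi$ is a $1$-manifold \emph{near each regular point}; it says nothing about non-emptiness, connectedness, or whether $\Phi=0$ is globally a graph over one coordinate. The paper's meaning of ``single-parameter family'' is exactly that for every $a\in(1,2)$ there is a \emph{unique} $b=\hat b(a)$ (and then a unique $m=\hat m(a)$), and establishing this is the heart of the proof: Lemma~\ref{xy1} obtains the strict monotonicity $\partial g/\partial b>0$ through a careful case-by-case sign analysis; Lemma~\ref{g(a,b)} combines this with boundary evaluations --- one of which, $g(a,5/2)>0$ for all $a\in[1,2]$, already requires the interval arithmetic of Section~\ref{IntK} --- to produce a unique $\hat b(a)$ on $(\sqrt{2},5/2)$; and Lemma~\ref{coro2} uses interval arithmetic again to exclude solutions with $b\in[5/2,\sqrt{2}(\sqrt{3}+1)]$. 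None of this is recoverable from a dimension count.

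You also conflate two different degeneracies. Vanishing of $\nabla\Phi$ would be a singularity of the shape curve itself; the fold in Theorem~\ref{theo2} is instead where $d\hat m/da=0$, i.e.\ a critical point of the mass function \emph{along} the already-smooth curve $\Gamma=\{g=0\}$. The paper keeps these separate: $\partial g/\partial b>0$ makes $\Gamma$ a global graph (Theorem~\ref{theo1}), and only afterwards does Section~\ref{massratio} locate the unique critical point of $\hat m$. In particular, the computer assistance is not deferred to Theorem~\ref{theo2}; it is already part of the proof of Theorem~\ref{theo1}.
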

\begin{rema}
	Roughly speaking, geometrically, this family of central configurations is fully \enquote{determined} by the small blue isosceles triangle, which is uniquely determined by $r_{13}$ if the base $r_{12}$ is fixed, as shown in Figure \ref{isotri}. Specifically, this small triangle deforms continuously from a degenerate triangle position with $m_1,m_2$ and $m_3$ collinear, to an equilateral triangle position. Correspondingly, the angle $\angle 312=\angle 321$ increases from $0$ to $\pi/3$.
\end{rema}

\begin{theo}\label{theo2}
Suppose that the concave kite central configurations possess two pairs of equal masses, with $m_1=m_2=1$ on the base and $m_3=m_4=m>0$ on the symmetry axis. Then 
\begin{enumerate}
	\item there are precisely two types of concave kite central configurations if $m\in (0,m_0)$; 
	\item there is only one if $m=m_0$; 
	\item there is no such kite central configuration if $m>m_0$, 
\end{enumerate}
where $m_0\approx1.002713329037083$. 

Furthermore, if $m=0$ is admissible, there are exactly two types of concave kite central configurations. In both, $m_1,m_2$ and $m_4$ form an equilateral triangle convex hull, and $m_3$ is located either on the midpoint of $m_1$ and $m_2$, or it coincides with $m_4$. 
\end{theo}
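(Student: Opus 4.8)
The strategy is to convert the classification into a one–variable unimodality statement and then certify that statement rigorously with interval arithmetic.

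First I would fix coordinates adapted to the symmetry provided by Theorem~\ref{theo1}: use the dilation freedom to put $m_1=(-1,0)$, $m_2=(1,0)$ on the base and $m_3=(0,s)$, $m_4=(0,u)$ on the symmetry axis, with the labelling chosen so that $m_3$ is the body inside the triangle $m_1m_2m_4$. Substituting into \eqref{cc}, the equations for $m_1$ and $m_2$ coincide under the reflection, as do the axis components of the equations for $m_3$ and $m_4$, so the whole system collapses to three scalar equations in $s,u,\lambda$ and the mass parameter $m$. Eliminating $\lambda$ leaves two polynomial equations $F_1(s,u,m)=F_2(s,u,m)=0$, and by Theorem~\ref{theo1} their common zero set, restricted to the concave branch with $m_3$ inside, is a smooth curve. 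I would reparameterize that curve by the shape variable of the Remark, e.g.\ $t=\tan\angle 312$ (so $\angle 312=\angle 321$), which ranges over $(0,\sqrt3)$ as $\angle 312$ runs over $(0,\pi/3)$.

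Next I would eliminate the remaining shape variable to obtain a single polynomial relation $P(t,m)=0$ cutting out this curve, so that along the relevant branch $m=R(t)$ is an algebraic function of $t$, smooth on $(0,\sqrt3)$. The goal is then to show that $R$ is \emph{strictly unimodal}: $R(t)\to 0$ at both endpoints $t\to0^+$ and $t\to\sqrt3^-$ (the two degenerate limits of the $m=0$ part), $R$ has a unique interior critical point $t_0$, and $R'>0$ on $(0,t_0)$, $R'<0$ on $(t_0,\sqrt3)$. Granting this, with $m_0:=R(t_0)$ the equation $R(t)=m$ has exactly two solutions for $m\in(0,m_0)$, exactly one for $m=m_0$, and none for $m>m_0$; since $t\mapsto$ (equivalence class of the corresponding concave kite) is a bijection by Theorem~\ref{theo1}, this is precisely the asserted count, and the single configuration at $m=m_0$ is a nondegenerate fold in the reduced shape space because $R'(t_0)=0$ and $R''(t_0)\neq0$.

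The unimodality is where the computer assistance is needed. Using interval arithmetic I would (i) enclose $t_0$ and $m_0$ in narrow intervals and apply the Krawczyk operator to the system $\{P=0,\ \partial P/\partial t=0\}$ (equivalently to $R'=0$) to certify that exactly one such $t_0$ exists, that it is simple, and that $R''(t_0)<0$, which pins down $m_0\approx1.002713329037083$ and establishes the fold; and (ii) subdivide the closed interval $[\varepsilon,\sqrt3-\varepsilon]$ into finitely many boxes and, on every box except the one containing $t_0$, bound $R'(t)$ — via the sign of $-(\partial P/\partial t)/(\partial P/\partial m)$ on a Krawczyk-certified enclosure of the branch — strictly away from zero with the correct sign, the box around $t_0$ being covered by the sign change from (i). The endpoint neighborhoods $(0,\varepsilon)$ and $(\sqrt3-\varepsilon,\sqrt3)$, where the configuration degenerates (a collinear limit as $t\to0^+$, a double collision $m_3\leftrightarrow m_4$ as $t\to\sqrt3^-$) and the equations become singular, would be handled by a local change of variables removing the singularity together with explicit a~priori estimates showing $R$ is monotone there. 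I expect step (ii) to be the main obstacle: one must choose the shape coordinates so that the eliminated polynomial $P$ has manageable degree, isolate the correct concave branch cleanly (separating it from convex and collinear solutions of the eliminated system), and make the enclosures tight enough — against the usual dependency-problem blow-up and the delicate behavior near the singular endpoints — that a feasible number of subdivisions certifies monotonicity over the whole range.

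Finally, the $m=0$ statement I would obtain by analyzing the two endpoints of the family, equivalently by passing to the limit $m\to0^+$ in the reduced equations: dividing the $m_3$– and $m_4$–equations by $m$, the center of mass tends to the midpoint of $m_1m_2$ and $\lambda$ to the two–body value $2/r_{12}^{3}=1/4$, and both axis equations reduce to $-\lambda h=-2h\,(1+h^2)^{-3/2}$ for the height $h$. Its solutions $h=0$ and $h^2=3$ give, respectively, $m_3$ at the midpoint of $m_1m_2$ with $m_4$ at the apex $(0,\sqrt3)$ of the equilateral triangle on that base (the $t\to0^+$ limit), or $m_3$ coinciding with $m_4$ at that apex (the $t\to\sqrt3^-$ limit, a collision limit controlled by the same endpoint asymptotics used in step (ii)) — exactly the two configurations claimed.
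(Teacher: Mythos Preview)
Your proposal is correct and follows essentially the same strategy as the paper: parametrize the family of Theorem~\ref{theo1} by a single shape variable, show the resulting mass function is strictly unimodal with limit $0$ at both endpoints, and certify the unique interior maximum via interval arithmetic and the Krawczyk operator. The paper's execution is somewhat leaner in that it avoids your elimination to a single polynomial $P(t,m)$ by working directly with the pair $(a,b)$ subject to a mass-free constraint $g(a,b)=0$ (so the branch-isolation worry never arises), and it establishes monotonicity by certifying that $d\hat m/da=0$ has a unique solution on the curve $g=0$ and then checking the sign of the derivative at the single explicit point $(2/\sqrt3,2)$, rather than bounding $R'$ away from zero over a full subdivision.
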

\begin{figure}[htbp]
        \begin{minipage}{0.5\textwidth}
        \centering
        \includegraphics[width=\textwidth]{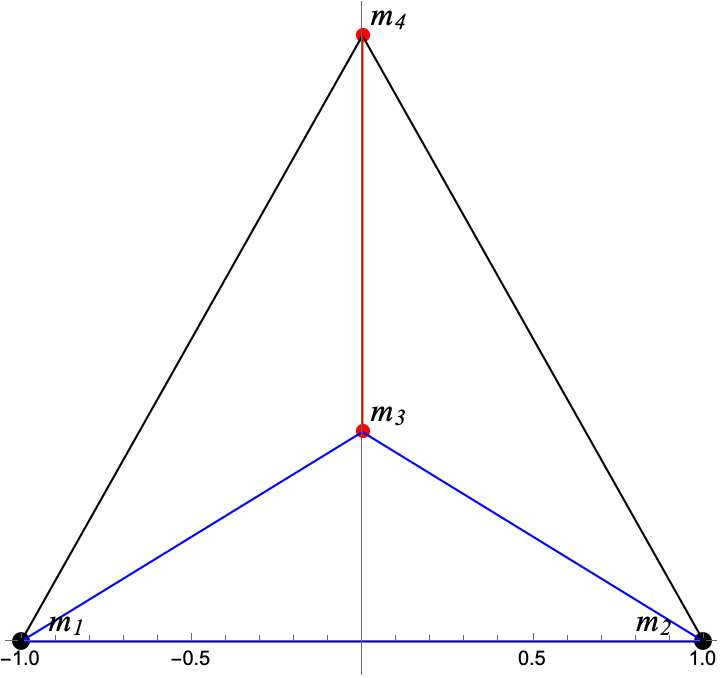}
    \end{minipage}
      \hfill
    \begin{minipage}{0.45\textwidth}
        \centering
        \caption{A symmetric planar central configuration in the concave 4-body problem with two pairs of equal masses, where $m_1=m_2$ are located on the base and $m_3=m_4$ on its symmetric axis with $m_3$ inside the triangle formed by the other three.}
         \label{isotri}
    \end{minipage}
\end{figure}
Our results improve and extend those of Alvarez-Ram\'irez and Llibre  \cite{martha2013} for the same concave kite shape. While their study assumed $0<m<1$, we demonstrate that the configuration can be extended to $m=m_0>1$ and $m=0$. Similar discussions can be found in \cites{xie2012,sun2025} for the convex cases with two pairs of equal masses, i.e., the isosceles trapezoid and the rhombus central configurations, and in both cases, the mass ratio ranges in $[0,1]$ by normalizing the larger pair of masses to be 1. 

  
 Our study further examines the bifurcations of the concave configurations with two pairs of equal masses.
In \cite{rusu2016}, by using the Krawczyk method, Rusu and Santoprete studied the bifurcations of this type, rigorously proved the existence of the bifurcations, and classified them.
 In contrast to the analysis in their paper, Theorem \ref{theo2} establishes the critical threshold
$m_0$ beyond which the bifurcation branch ceases to continue. We also demonstrate that the unique solution at $m=m_0$ represents the only fold bifurcation point in this symmetric subspace, differing from the bifurcation structure in the entire planar configuration space of the 4-body problem. 
Building on the results from \cites{corbera2014,rusu2016}, as well as our study, we can establish a complete bifurcation picture for concave central configurations with two pairs of equal masses when the symmetry axis is fixed. The complete picture is illustrated in Figure \ref{bifurcation2}, with the corresponding numerical examples in Table \ref{values}. As the mass parameter $m$ increases from $0$ to $\infty$, symmetric solutions emerge in pairs. When $m$ passes through a pitchfork bifurcation point at $\tilde m_\ast\approx0.992299447752385$, pairs of asymmetric solutions appear. Finally, when $m$ reaches $m_0$, the two symmetric solutions coalesce and then vanish for $m>m_0$, leaving the asymmetric solutions as the only remaining ones.  

The whole paper is constructed as follows.
In the next section, we introduce some necessary results used in this paper, including the Perpendicular Bisector Theorem, basic theories and settings for interval arithmetic and the Krawczyk operator, and the related bifurcation theory. 
In Section \ref{iso2+2} we prove Theorem \ref{theo1} and \ref{theo2}. At the end of Section \ref{iso2+2}, we discuss the bifurcations of this concave central configuration with two pairs of equal masses.

\section{Theoretical Background}
\subsection{Perpendicular Bisector Theorem}
The following result is widely used to determine the possible positions of bodies in a planar central configuration. According to the concept raised by Conley and Moeckel in \cite{moeckel1990}, the mutual distance $r_{ij}$ between any two bodies, together with its perpendicular bisector, divides the plane into four quadrants. The union of the first and third quadrants is an hourglass-shaped region, which will be called a \enquote{cone}. Similarly, the second and fourth quadrants together form another cone. The phrase \enquote{open cone} refers to a cone minus the axes. One can see the colored region in Figure \ref{corollary1} (or Figure \ref{b<2}) for illustration.
\begin{lemma}[Perpendicular Bisector Theorem, \cite{moeckel1990}]\label{perpendicular}
Let $q$ be a planar central configuration and let $q_i$ and $q_j$ be any two of its points. Then, if one of the two open cones determined by the line through $q_i$ and $q_j$ and its perpendicular bisector contains points of the configuration, so does the other one.
\end{lemma}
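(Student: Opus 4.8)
The plan is to derive the statement directly from the central configuration equations \eqref{cc} written for the two chosen bodies, by taking their difference and projecting onto the direction of the perpendicular bisector, where the bulk of the equation disappears. First I would fix coordinates adapted to the geometry: place the midpoint of the segment $q_iq_j$ at the origin, let the unit vector $e=(q_j-q_i)/r_{ij}$ point along the line $\ell$ through $q_i,q_j$, and let $n$ be the unit normal to $\ell$ (the direction of the perpendicular bisector $b$). Writing each remaining body as $q_k=X_k\,e+Y_k\,n$, the line $\ell$ is $\{Y=0\}$, the bisector $b$ is $\{X=0\}$, and the two lines cut the plane into the two open cones $C^{+}=\{X_kY_k>0\}$ and $C^{-}=\{X_kY_k<0\}$ named in the statement, each being a pair of vertically opposite quadrants.

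The key step is an algebraic reduction. Dividing \eqref{cc} by $m_i$ and by $m_j$ respectively and subtracting, the center of mass $c$ cancels and, after pulling out the mutual $i$--$j$ term, one obtains
\begin{equation*}
\Bigl(\lambda-\tfrac{m_i+m_j}{r_{ij}^{3}}\Bigr)(q_i-q_j)=\sum_{k\neq i,j}m_k\Bigl[\frac{q_i-q_k}{r_{ik}^{3}}-\frac{q_j-q_k}{r_{jk}^{3}}\Bigr].
\end{equation*}
The left-hand side is a scalar multiple of $q_i-q_j$, hence parallel to $\ell$, so taking the inner product with $n$ annihilates it. Since $q_i,q_j\in\ell$ give $q_i\cdot n=q_j\cdot n=0$ while $q_k\cdot n=Y_k$, the projection collapses to the clean identity
\begin{equation*}
\sum_{k\neq i,j}m_k\,Y_k\,\Bigl(\frac{1}{r_{jk}^{3}}-\frac{1}{r_{ik}^{3}}\Bigr)=0.
\end{equation*}
Note that the unknown multiplier $\lambda$ and the coefficient $(m_i+m_j)/r_{ij}^{3}$ never enter, which is exactly what makes projection onto $n$ the right move.

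The conclusion then follows from a sign count. Because $r_{ik}^{2}-r_{jk}^{2}=2\,(q_j-q_i)\cdot q_k=2r_{ij}X_k$, a body lies on the $q_j$ side of $b$ precisely when it is closer to $q_j$; hence $\operatorname{sign}\bigl(1/r_{jk}^{3}-1/r_{ik}^{3}\bigr)=\operatorname{sign}(X_k)$, and each summand has sign $\operatorname{sign}(X_kY_k)$ with positive weight $m_k|Y_k|\,|1/r_{jk}^{3}-1/r_{ik}^{3}|$. Thus every body in $C^{+}$ contributes a strictly positive term and every body in $C^{-}$ a strictly negative one, while bodies lying on $\ell$ (where $Y_k=0$) or on $b$ (where $X_k=0$) contribute nothing and occupy neither open cone. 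Since the masses are positive and the total vanishes, the positive and the negative parts must both be empty or both be nonempty: if $C^{+}$ contains a configuration point then so must $C^{-}$, and conversely, which is exactly the assertion.

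The computation is short, so the only real care lies in the bookkeeping of the last paragraph. One must verify that the balance relation pairs the two ``bowtie'' cones $C^{\pm}$ rather than vertically opposite single quadrants; a square central configuration of four equal masses, whose two remaining masses land in adjacent quadrants when $q_i,q_j$ are taken to be one edge, already rules out the naive quadrant-to-quadrant reading. One must also confirm that masses lying exactly on $\ell$ or on $b$ are harmless, since they are excluded from the open cones and drop out of the identity. I expect this sign-and-geometry identification to be the only place where an error could creep in; the projection step itself is essentially forced.
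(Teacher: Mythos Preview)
Your proof is correct and is essentially Moeckel's original argument: subtract the two central-configuration equations for bodies $i$ and $j$, project onto the normal of the line $q_iq_j$ to kill the left-hand side, and read off the sign identity $\sum_{k\neq i,j} m_k Y_k\bigl(r_{jk}^{-3}-r_{ik}^{-3}\bigr)=0$, in which every summand carries the sign of $X_kY_k$. Your bookkeeping is accurate, including the observation that the relevant ``cones'' are the two bowtie regions $\{X_kY_k>0\}$ and $\{X_kY_k<0\}$ rather than individual quadrants.

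The paper, however, does not prove this lemma at all: it is quoted from \cite{moeckel1990} as a known tool and used without argument. So there is no paper proof to compare against; what you have written is the standard proof that the citation points to.
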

\subsection{The interval arithmetic method and the Krawczyk operator}\label{IntK}
Suppose that $F(x):D\subset \mathbb{R}^N\to\mathbb{R}^N$ is a  $C^1$ function with $x\in\mathbb{R}^N$. We aim to solve the equation
\begin{equation}\label{equsys}
    F(x)=0.
\end{equation} 
We use interval analysis to find zeros of \eqref{equsys}. The basic theory, including the interval arithmetic and the Krawczyk operator, can be found in \cites{alefeld2000,krawczyk1969,moore2009,neumaier1990}, as well as its applications in central configurations \cites{moczurad2019,rusu2016,sun2023}. 
Let $[x]\subset \mathbb{R}^N$ be a closed interval. In other words, $[x]=[x_{11},x_{12}]\times [x_{21},x_{22}]\times \cdots [x_{N1},x_{N2}]$, where $x_{i1},x_{i2}(i=1,\cdots,N)$ are two endpoints of a closed interval in each dimension. If $x_{i1}=x_{i2}$ holds for all $i=1,\cdots,N$, $[x]$ denotes the singleton $\{(x_1,\cdots,x_N)\}$. We denote by $R(F;[x])= \{F(x):x\in [x]\}$ the range of $F$ over $[x]$ and $F([x])$ the final interval under the interval computation.
A simple but essential property of the interval arithmetic is \begin{equation}\label{intervalproperty}
   R(F;[x]) \subset F([x]).
\end{equation}
The Krawczyk operator is based on the modified Newton operator and the Brouwer fixed-point theorem.
The derivative, namely, the Jacobian matrix of $F$ at $x$, is denoted by $DF(x)$. If $DF(x)$ is non-singular, we define the Krawczyk operator of $F$ as the following:
\begin{equation}\label{kop}
    K(x_0,[x])=x_0-C\cdot F(x_0) +(Id-C\cdot DF([x])([x]-x_0)),
\end{equation}
where $C\in\mathbb{R}^{N\times N}$ is a linear isomorphism. The properties of this operator are listed as follows
\begin{lemma}\label{koperator}
\begin{enumerate}
\item If $x^\ast \in [x]$ and $F(x^\ast)=0$, then $x^\ast\in K(x_0,[x])$. 
\item If $K(x_0,[x])\subset int[x]$, then there exists exactly one solution of equation $F(x)=0$ in $[x]$. This solution is non-degenerate with $DF(x)$ an isomorphism.
\item If $K(x_0,[x])\cap [x]=\varnothing$, then for all $x\in [x]$ we have $F(x)\neq0$. 
\end{enumerate}
\end{lemma}

We can write a program in SageMath \cite{sagemath} to find zeros of \eqref{equsys}.  
The main idea of the program is to cut the $N$-dimensional interval into small enough pieces and find zero on each piece one by one. 
The property \eqref{intervalproperty} and the definition \eqref{kop}, together with Lemma \ref{koperator}, guarantee that this computer-assisted interval approach is rigorous in mathematics. This approach has been successfully applied in \cite{sun2023} to partially solve the existence and uniqueness conjecture for the planar convex 4-body central configurations, in which we can find very explicit and detailed information for the arithmetic. In contrast, our program is similar, but simpler and more efficient, since in this paper, $N$ equals 1 or 2, which makes it easy to iterate and calculate. We apply the program in Section \ref{iso2+2} in the proofs of Lemma \ref{g(a,b)} and Theorem \ref{theo2} to analyze the zeros of the corresponding equations, respectively. Then we use it again to discuss the bifurcations. 

\subsection{Bifurcations}\label{bifurcations}
Suppose that
\begin{equation}\label{bifurF}
\begin{aligned}
    F:\mathbb{R}^N\times \mathbb{R}&\to \mathbb{R}^N\\
     (x,\mu)&\mapsto F(x,\mu),x\in \mathbb{R}^N,\mu\in\mathbb{R}
\end{aligned}
\end{equation}
is a smooth map and $\mu$ is a parameter. We follow the notation in Subsection \ref{IntK}, that $DF(x)$ is the Jacobian of $F$ with respect to $x$. We use $F_\mu$ to denote the vector of partial derivatives of the components of $F$ with respect to $\mu$. Consider the system $F(x,\mu)=0$.  When $\mu$ varies, the number of its solutions may change, implying bifurcations in the system. 
The following theorem provides sufficient conditions for certain bifurcations. One can refer to \cite{kuznetsov2004} for more information, as well as to \cite{rusu2016} for a particular application in central configurations. 
\begin{theo}\label{typeofbifur}
    Suppose that $F(x_0,\mu_0)=0$, and the Jacobian matrix $J_F=DF(x_0,\mu_0)$ has a simple eigenvalue $\lambda_0=0$ with eigenvector $v$. Denote by $w$ the adjoint eigenvector, i.e., $J_F^T w=0$, where $J_F^T$ is the transpose of $J_F$. 
\begin{enumerate}
    \item If $\left\{\begin{aligned}
            w^TF_\mu(x_0,\mu_0)&\neq0\\
            w^T(D^2F(x_0,\mu_0)(v,v))&\neq0\end{aligned}\right.,$ \\then the system experiences a \textbf{fold bifurcation} at the equilibrium point $x_0$ as the parameter $\mu$ passes through the bifurcation value $\mu=\mu_0$.
    \item If $\left\{\begin{aligned}
            w^TF_\mu(x_0,\mu_0)&=0\\
            w^T(DF_\mu(x_0,\mu_0)v)&\neq0\\
            w^T(D^2F(x_0,\mu_0)(v,v))&\neq0\end{aligned}\right.,$ \\then the system experiences a \textbf{transcritical bifurcation} at the equilibrium point $x_0$ as the parameter $\mu$ passes through the bifurcation value $\mu=\mu_0$.
    \item If $\left\{\begin{aligned}
            w^TF_\mu(x_0,\mu_0)&=0\\
            w^T(DF_\mu(x_0,\mu_0)v)&\neq0\\
            w^T(D^2F(x_0,\mu_0)(v,v))&=0\\
            w^T(D^3F(x_0,\mu_0)(v,v,v))&\neq0\end{aligned}\right.,$ \\then the system experiences a \textbf{pitchfork bifurcation} at the equilibrium point $x_0$ as the parameter $\mu$ passes through the bifurcation value $\mu=\mu_0$. \\If $w^T(D^3F(x_0,\mu_0)(v,v,v))<0$, the branches occur for $\mu>\mu_0$, and the bifurcation is \textbf{supercritical}. Otherwise, the branches occur for $\mu<\mu_0$, and the bifurcation is \textbf{subcritical}.
\end{enumerate}
\end{theo}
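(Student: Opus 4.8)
The plan is to prove this by the Lyapunov--Schmidt reduction, which converts the $N$-dimensional system $F(x,\mu)=0$ near $(x_0,\mu_0)$ into a single scalar equation whose local zero set realizes each of the three normal forms; this is the projection method of \cite{kuznetsov2004} written in the present notation. Since $\lambda_0=0$ is a \emph{simple} eigenvalue of $J_F$, one has the direct-sum splittings $\mathbb{R}^N=\operatorname{span}\{v\}\oplus\operatorname{Range}J_F$ on the source and, because $w^Tz=0$ for every $z\in\operatorname{Range}J_F$, also $\mathbb{R}^N=\operatorname{span}\{w\}\oplus\operatorname{Range}J_F$ on the target. Writing $x=x_0+sv+y$ with $y\in\operatorname{Range}J_F$ and letting $Qz=(w^Tz/w^Tw)\,w$ be the projection onto $\operatorname{span}\{w\}$, the equation $F=0$ is equivalent to the pair $(Id-Q)F(x_0+sv+y,\mu)=0$ and $w^TF(x_0+sv+y,\mu)=0$. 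The linearization of the first equation in $y$ at $(s,\mu)=(0,\mu_0)$ is $(Id-Q)J_F$ restricted to $\operatorname{Range}J_F$, an isomorphism of that subspace, so the implicit function theorem produces a smooth $y=y(s,\mu)$ with $y(0,\mu_0)=0$, and differentiating its defining identity gives $y_s(0,\mu_0)=0$.

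Substituting this $y$ into the remaining equation yields the bifurcation function $\varphi(s,\mu):=w^TF\bigl(x_0+sv+y(s,\mu),\mu\bigr)$, which satisfies $\varphi(0,\mu_0)=0$ and, using $w^TJ_F=0$ together with $y_s(0,\mu_0)=0$, also $\varphi_s(0,\mu_0)=w^TJ_Fv=0$; this degeneracy is precisely why $(x_0,\mu_0)$ is a bifurcation point. A chain-rule computation at $(0,\mu_0)$, again invoking $w^TJ_F=0$ to annihilate the terms carrying the higher $s$- and $\mu$-derivatives of $y$, identifies $\varphi_\mu=w^TF_\mu$, $\varphi_{ss}=w^T\!\bigl(D^2F(v,v)\bigr)$, $\varphi_{s\mu}=w^T\!\bigl(DF_\mu v\bigr)$, and $\varphi_{sss}=w^T\!\bigl(D^3F(v,v,v)\bigr)$, up to correction terms involving $y_\mu$ and $y_{ss}$ that vanish in the standard setting where $w^TF_\mu=0$ and the bifurcation is organized around a known solution branch, exactly as in \cite{kuznetsov2004}. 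Under these identifications the three hypotheses become respectively (i) $\varphi_\mu\neq0,\ \varphi_{ss}\neq0$; (ii) $\varphi_\mu=0,\ \varphi_{s\mu}\neq0,\ \varphi_{ss}\neq0$; (iii) $\varphi_\mu=0,\ \varphi_{s\mu}\neq0,\ \varphi_{ss}=0,\ \varphi_{sss}\neq0$.

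It then remains to read off the local zero set of $\varphi$. In case (i), since $\varphi_\mu\neq0$ the implicit function theorem solves $\varphi=0$ for $\mu=m(s)$ with $m'(0)=-\varphi_s/\varphi_\mu=0$ and $m''(0)=-\varphi_{ss}/\varphi_\mu\neq0$: locally a single nondegenerate parabola of equilibria, i.e.\ a fold. In cases (ii) and (iii) one uses the trivial branch to factor $\varphi(s,\mu)=s\,\psi(s,\mu)$ (Hadamard's lemma) with $\psi(0,\mu_0)=\varphi_s(0,\mu_0)=0$ and $\psi_\mu(0,\mu_0)=\varphi_{s\mu}\neq0$; solving $\psi=0$ for $\mu=m(s)$ gives, in case (ii), $m'(0)=-\tfrac12\varphi_{ss}/\varphi_{s\mu}\neq0$, a curve crossing $\{s=0\}$ transversally (transcritical), and in case (iii), $\psi_s(0,\mu_0)=\tfrac12\varphi_{ss}=0$ so $m'(0)=0$ while $m''(0)=-\tfrac13\varphi_{sss}/\varphi_{s\mu}\neq0$, a parabola tangent to $\{s=0\}$ (pitchfork); the sign of $m''(0)$, i.e.\ of $-w^T(D^3F(v,v,v))/w^T(DF_\mu v)$, determines whether the nontrivial branches lie in $\mu>\mu_0$ or $\mu<\mu_0$, which (under the usual sign normalization of $v,w$) is the super/subcritical distinction asserted.

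The main obstacle, and really the only delicate point, is the bookkeeping of the second paragraph: checking that the correction terms in $\varphi_{s\mu}$ and $\varphi_{sss}$ do not affect the stated non-degeneracy conditions, and, relatedly, making precise the convention of a known solution branch through $(x_0,\mu_0)$ (which may be normalized to $\{x=x_0\}$) under which the transcritical and pitchfork statements hold; both are treated as in \cite{kuznetsov2004}. Everything else is routine, and in the applications of this paper $F$ is the reduced central-configuration map with $N\in\{1,2\}$, so the projections, the vectors $v,w$, and all derivatives above are explicit and the verification reduces to a finite computation.
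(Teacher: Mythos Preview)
The paper does not prove this theorem at all: it is stated in Section~\ref{bifurcations} as a known result, explicitly described as something that ``can be seen as the definition of different bifurcations for simple eigenvalues,'' with a reference to \cite{kuznetsov2004} (and to \cite{rusu2016} for the application to central configurations) in lieu of a proof. So there is nothing in the paper to compare your proposal against.

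That said, your Lyapunov--Schmidt reduction is exactly the standard route by which these Sotomayor-type conditions are derived in the cited reference, and your outline is correct in spirit. You also correctly flag the one genuine subtlety: the identifications $\varphi_{s\mu}=w^T(DF_\mu v)$ and especially $\varphi_{sss}=w^T(D^3F(v,v,v))$ are only literally true after discarding correction terms of the form $w^T D^2F(v,y_\mu)$ and $3\,w^T D^2F(v,y_{ss})$, and these do \emph{not} vanish automatically. They disappear under the usual extra structure (a persistent equilibrium branch that one may normalize to $x\equiv x_0$, or a $\mathbb{Z}_2$-symmetry forcing the even/odd cancellations), which is implicitly assumed in the textbook statement you are reproducing and is present in the symmetric kite application later in the paper. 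Since the paper is simply quoting the theorem and the application has $N\le 2$ with explicit $v,w$, your final remark that everything reduces to a finite computation is on target.
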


\section{Main results}\label{iso2+2}

We show in the following that the only position of a concave kite central configuration with two pairs of equal masses is the one shown in Figure \ref{isotri}. In other words, one pair of the equal masses must lie on the base of an isosceles triangle, and the other pair lies on the symmetry axis of the base, with one mass inside the triangle formed by the other three.
Let 
$$r_{13}=a,r_{14}=b$$ 
and $r_{12}=2$. 
The central configuration equations can be written under the position coordinates as
\begin{equation}\label{originalcceq}
\left\{
	\begin{aligned}
		\lambda  (q_1-c)=&\frac{m_2 (q_1-q_2)}{r_{12}^3}+\frac{m_3 (q_1-q_3)}{r_{13}^3}+\frac{m_4 (q_1-q_4)}{r_{14}^3},\\
\lambda  (q_2-c)=&\frac{m_1 (q_2-q_1)}{r_{12}^3}+\frac{m_3 (q_2-q_3)}{r_{23}^3}+\frac{m_4 (q_2-q_4)}{r_{24}^3},\\
\lambda  (q_3-c)=&\frac{m_1 (q_3-q_1)}{r_{13}^3}+\frac{m_2 (q_3-q_2)}{r_{23}^3}+\frac{m_4 (q_3-q_4)}{r_{34}^3},\\
\lambda  (q_4-c)=&\frac{m_1 (q_4-q_1)}{r_{14}^3}+\frac{m_2 (q_4-q_2)}{r_{24}^3}+\frac{m_3 (q_4-q_3)}{r_{34}^3}.
	\end{aligned}
	\right.
\end{equation}
We set 
\begin{equation}\label{q1234}
	q_1=(-1,0), q_2=(1,0),q_3=(0, \sqrt{a^2-1}),q_4=(0,\sqrt{b^2-1})
\end{equation}
with $b > a>1$. 
\subsection{Proof of Proposition \ref{proposition1}}\label{prop1}
The proposition holds for $m=1$ according to the two concave equal mass central configurations in \cite{albouy1996}, i.e., one possesses an equilateral triangle convex hull, and the other one possesses an isosceles triangle shape.

If $m\neq 1$, we suppose that $m_1=m_4=1$ and $m_2=m_3=m$. Substituting the masses and \eqref{q1234} into \eqref{originalcceq} and after some simplification we obtain six equations
\begin{equation}
\left\{
\begin{aligned}
		0=&\frac{ m}{a^3}+\frac{1}{b^3}-\frac{ \lambda  (3 m+1)}{2(m+1)}+\frac{m}{4},\\
		0=&-\frac{m}{a^3}-\frac{1}{b^3}+\frac{\lambda  (m+3)}{2 (m+1)}-\frac{1}{4},\\
		0=&\frac{m-1}{a^3}+\frac{\lambda(1 - m)}{2 (m+1)},\\
		0=&\frac{m-1}{b^3}+\frac{\lambda(1 - m)}{2 (m+1)},\\
		0=&-\frac{\lambda  \left(\sqrt{a^2-1} m+\sqrt{b^2-1}\right)}{2 (m+1)}+\frac{\sqrt{a^2-1} m}{a^3}+\frac{\sqrt{b^2-1}}{b^3},\\
		0=&\frac{\lambda  \left(\sqrt{a^2-1}-\sqrt{b^2-1}\right)}{m+1}+\frac{1}{\left(\sqrt{a^2-1}-\sqrt{b^2-1}\right)^2}-\frac{\sqrt{a^2-1}}{a^3}+\frac{\sqrt{b^2-1}}{b^3}.
		\end{aligned}
	\right.
\end{equation}
From the sum of the first two equations, we have 
$$\frac{(m-1) (-4 \lambda +m+1)}{4 (m+1)}=0,$$
which implies $\lambda =(m+1)/4$. Substituting it into the third and fourth equations, we have 
\begin{equation*}
	\begin{aligned}
		0=&-\frac{\left(a^3-8\right) (m-1)}{8 a^3},\\
		0=&-\frac{\left(b^3-8\right) (m-1)}{8 b^3},
	\end{aligned}
\end{equation*}
which implies $a=b=2$ since $m\neq 1$, contradicting to the assumption.
Almost in the same way, we derive the same contradiction when $m_1=m_3=1$ and $m_2=m_4=m$. Therefore, the only way to deposit the masses is $m_1=m_2$ and $m_3=m_4$, as shown in Figure \ref{isotri}.
\subsection{A single-parameter family}

Now let $m_1=m_2=1$ and $m_3=m_4=m$.
The center of mass is $c=(0,\frac{m(\sqrt{a^2-1}+\sqrt{b^2-1})}{2(1+m)})$. Substituting the masses and positions in \eqref{q1234} into \eqref{originalcceq} and with some simplification we obtain 
\begin{equation*}
	\left\{
	\begin{aligned}
		\lambda=& m (\frac{1}{a^3}+\frac{1}{b^3})+\frac{1}{4},\\
	0=&	\frac{\lambda   (\sqrt{a^2-1}+\sqrt{b^2-1})}{2 (m+1)}-\frac{\sqrt{a^2-1}}{a^3}-\frac{\sqrt{b^2-1}}{b^3},\\
	0=&-\frac{\lambda  m (\sqrt{a^2-1}+\sqrt{b^2-1})}{2 (m+1)}+\frac{m}{(\sqrt{a^2-1}-\sqrt{b^2-1})^2}+\sqrt{a^2-1}(\lambda -\frac{2}{a^3}),\\
	0=&-\frac{\lambda  m (\sqrt{a^2-1}+\sqrt{b^2-1})}{2 (m+1)}-\frac{m}{(\sqrt{a^2-1}-\sqrt{b^2-1})^2}+\sqrt{b^2-1} (\lambda -\frac{2}{b^3}).
	\end{aligned}
	\right.
\end{equation*}
Noticing that the second equation multiplied by 2 is equivalent to the sum of the last two equations. After multiplying the second equation by $m$ and adding it to the last two equations respectively, the first terms in both equations are eliminated. Then, we can substitute $\lambda$ derived from the first equation into the last two to get the following three equations: 
\begin{subequations}
\small
	\begin{align}
		&-\lambda +m (\frac{1}{a^3}+\frac{1}{b^3})+\frac{1}{4}=0,\label{l1}\\
		g_1=&m \left(\frac{1}{(\sqrt{a^2-1}-\sqrt{b^2-1})^2}+\frac{\sqrt{a^2-1}-\sqrt{b^2-1}}{b^3}\right)-\frac{ \sqrt{a^2-1}(8-a^3)}{4a^3}=0,\label{l2}\\
		g_2=&m \left(\frac{1}{(\sqrt{a^2-1}-\sqrt{b^2-1})^2}+\frac{\sqrt{a^2-1}-\sqrt{b^2-1}}{a^3}\right)+\frac{ \sqrt{b^2-1}(8-b^3)}{4b^3}=0.\label{l3}
	\end{align}
\end{subequations}
From \eqref{l2} we obtain 
\begin{equation}\label{mintwo}
	m=-\frac{\sqrt{a^2-1} (a^3-8) b^3 (\sqrt{a^2-1}-\sqrt{b^2-1})^2}{4 a^3 (b^3+(\sqrt{a^2-1}-\sqrt{b^2-1})^3)}.
\end{equation}
From \eqref{l2} and \eqref{l3} we obtain a mass-free equation
\begin{equation*}
\begin{aligned}
	&a^6 \sqrt{b^2-1} (b^3-8) (b^3+(\sqrt{a^2-1}-\sqrt{b^2-1})^3)\\
	+&b^6\sqrt{a^2-1} (a^3-8)  (a^3+(\sqrt{a^2-1}-\sqrt{b^2-1})^3)=0. 
\end{aligned}
\end{equation*}
First, we characterize $a$ and $b$ from Lemma \ref{perpendicular}. For convenience, we denote in the following the two open intervals respectively by 
\begin{equation*}
I_A=(1,2) \text{ and } \tilde I_B=(\sqrt{2},\sqrt{2}(\sqrt{3}+1)).
\end{equation*}

\begin{coro}\label{1<a<2}
	Suppose that $(a,b)$ is a solution for \eqref{l1}-\eqref{l3}, satisfying $a<b$. Then we have $a\in I_A$ and $b\in \tilde I_B$ for $m>0$. 
	More precisely, we have the following:
	\begin{enumerate}
		\item If $\sqrt{2}<b<2$, then $1<a<r_{34}<2/\sqrt{3}$.
		\item If $b=2$, then $a=2/\sqrt{3}$ and $m=1$.
		\item If $2<b<\sqrt{2}(\sqrt{3}+1)$, then $2/\sqrt{3}<r_{34}<a<2$. 
	\end{enumerate}
In addition, if $m=0$ is admissible, then $b=2$, and $a=1$ or $2$. 
\end{coro}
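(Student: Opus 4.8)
\emph{Overall strategy.} The plan is to reduce every claim to the sign of a handful of explicit factors, combining the closed form \eqref{mintwo} for $m$, equations \eqref{l2}--\eqref{l3}, and the Perpendicular Bisector Theorem (Lemma \ref{perpendicular}). Throughout write $r_{34}=\sqrt{b^2-1}-\sqrt{a^2-1}>0$ and note the elementary bound $r_{34}<\sqrt{b^2-1}<b$, so that $b^3+(\sqrt{a^2-1}-\sqrt{b^2-1})^3=b^3-r_{34}^3>0$.

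\emph{The $a$-interval.} In \eqref{mintwo} the factors $\sqrt{a^2-1}$, $b^3$, $(\sqrt{a^2-1}-\sqrt{b^2-1})^2$, $a^3$ and $b^3-r_{34}^3$ are all positive, so $\operatorname{sign}(m)=\operatorname{sign}(8-a^3)$; hence $m>0$ forces $a<2$. (Equivalently: apply Lemma \ref{perpendicular} to the pair $q_1,q_3$. Since $r_{34}<b$, body $q_4$ is strictly closer to $q_3$ than to $q_1$, and $q_2,q_4$ lie on opposite sides of the line $q_1q_3$; the theorem then forbids $q_2$ from also being strictly closer to $q_1$, so $r_{23}<r_{12}$, i.e. $a<2$.) Also $a=1$ would make $q_1,q_2,q_3$ collinear with $q_4$ off that line, which Lemma \ref{perpendicular} excludes (or: \eqref{l2} with $a=1$, $m>0$ would give $b^3=(b^2-1)^{3/2}$), so $a\in I_A=(1,2)$.

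\emph{The trichotomy and the $b$-interval.} Apply Lemma \ref{perpendicular} to $q_1,q_4$. The only bodies off the line $q_1q_4$ are $q_2$ and $q_3$, and they lie strictly on the same side of it; the theorem then forces exactly one of them to be strictly closer to $q_1$ and the other strictly closer to $q_4$, unless both lie on the perpendicular bisector of $q_1q_4$. As ``$q_2$ closer to $q_1$'' $\Leftrightarrow b>2$ and ``$q_3$ closer to $q_1$'' $\Leftrightarrow a<r_{34}$, this yields
\[ b<2 \iff a<r_{34},\qquad b=2 \iff a=r_{34},\qquad b>2 \iff a>r_{34} \]
(the same trichotomy also follows from the sign of \eqref{l3} after solving it for $m$). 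If $b<2$ then $a+\sqrt{a^2-1}<\sqrt{b^2-1}$, and the left side exceeds $1$ since $a>1$, so $b>\sqrt2$. If $b>2$ then $a+\sqrt{a^2-1}>\sqrt{b^2-1}$, and the left side is $<2+\sqrt3$ since $a<2$, so $b^2<1+(2+\sqrt3)^2=8+4\sqrt3=(\sqrt6+\sqrt2)^2$, i.e. $b<\sqrt2(\sqrt3+1)$. If $b=2$ then $a=r_{34}$ gives $a+\sqrt{a^2-1}=\sqrt3$, hence $a=2/\sqrt3$ (the map $t\mapsto t+\sqrt{t^2-1}$ is strictly increasing and sends $2/\sqrt3$ to $\sqrt3$), and substituting into \eqref{mintwo} gives $m=1$. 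Thus $b\in\tilde I_B=(\sqrt2,\sqrt2(\sqrt3+1))$. Finally, if $m=0$ is admissible, \eqref{l2} forces $\sqrt{a^2-1}(8-a^3)=0$ and \eqref{l3} forces $\sqrt{b^2-1}(8-b^3)=0$, so $a\in\{1,2\}$ and $b=2$: geometrically $q_3$ at the midpoint of $q_1q_2$, or the degenerate $q_3=q_4$ at the apex of the equilateral triangle on $q_1q_2$.

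\emph{The refined bounds in (1)--(3), and the main obstacle.} What remains is to show $r_{34}<2/\sqrt3$ when $\sqrt2<b<2$, and $r_{34}>2/\sqrt3$ when $2<b<\sqrt2(\sqrt3+1)$; equivalently, $\operatorname{sign}(r_{34}-2/\sqrt3)=\operatorname{sign}(b-2)$ on the solution set. This is where the real work lies. The soft arguments above give only $\operatorname{sign}(a-r_{34})=\operatorname{sign}(b-2)$ and $\operatorname{sign}(a-2/\sqrt3)=\operatorname{sign}(b-2)$, which merely place $r_{34}$ and $2/\sqrt3$ on the same side of $a$ without ordering them. To order them one must use the mass-free relation obtained by equating the two expressions for $m$ coming from \eqref{l2} and \eqref{l3}: the plan is to prove that the locus $r_{34}=2/\sqrt3$ meets the solution set only at the equilateral-plus-centroid point $(a,b)=(2/\sqrt3,2)$ --- where $b-2$ vanishes too --- so that $r_{34}-2/\sqrt3$ cannot change sign along a solution branch without $b-2$ changing sign as well, and then to match signs at one point of the branch. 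Certifying the uniqueness of that intersection and the requisite control of the branch is exactly the role of the interval-arithmetic/Krawczyk apparatus of Subsection \ref{IntK}, used already in the proof of Theorem \ref{theo2}; I expect this quantitative step, not the sign bookkeeping, to be the bottleneck.
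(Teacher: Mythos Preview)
Your treatment of $a\in(1,2)$, of the trichotomy $\operatorname{sign}(a-r_{34})=\operatorname{sign}(b-2)$ via the perpendicular bisector of $q_1q_4$, of the resulting bounds $b>\sqrt2$ and $b<\sqrt2(\sqrt3+1)$, of the case $b=2$, and of the limit $m=0$, is essentially the paper's argument. One small difference: the paper handles $b=2$ by direct substitution into \eqref{l2}--\eqref{l3}, whereas you solve $a+\sqrt{a^2-1}=\sqrt3$; both are fine.

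There is one genuine gap. In your last paragraph you assert that ``the soft arguments above give $\operatorname{sign}(a-2/\sqrt3)=\operatorname{sign}(b-2)$'', but you have not proved this---knowing $a=2/\sqrt3$ \emph{at} $b=2$ does not order $a$ and $2/\sqrt3$ for $b\neq2$. The paper supplies the missing step: the perpendicular bisector of $q_1q_4$ meets the symmetry axis at a point $A_3$ of height $(b^2-2)/(2\sqrt{b^2-1})$, an increasing function of $b$ which equals $1/\sqrt3$ (the height of the centroid $A$ of the equilateral triangle on $q_1q_2$) exactly when $b=2$. Thus for $b<2$ the chain $\overline{m_3O}<\overline{A_3O}<\overline{AO}$ yields both $a<r_{34}$ (first inequality) and $\sqrt{a^2-1}<1/\sqrt3$, i.e.\ $a<2/\sqrt3$ (outer inequality); the chain reverses for $b>2$. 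This is the whole content of the paper's proof of items (1) and (3).

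Your ``main obstacle''---ordering $r_{34}$ against $2/\sqrt3$---is not addressed by the paper at all; its proof establishes only $a\lessgtr r_{34}$ and $a\lessgtr2/\sqrt3$ separately, never the relative position of $r_{34}$ and $2/\sqrt3$. In fact that inequality, as printed in the corollary, appears to be in error: at the solution $(a,b)\approx(1.0068,1.6641)$ on the curve of Figure~\ref{g=0} (so $b<2$) one computes $r_{34}\approx1.213>2/\sqrt3\approx1.155$, and at $(a,b)\approx(1.540,2.449)$ (so $b>2$) one gets $r_{34}\approx1.065<2/\sqrt3$. So the interval-arithmetic/Krawczyk machinery you propose is unnecessary here---and would in any case fail to establish the stated inequality, since it does not hold. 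The corollary's proof in the paper is entirely elementary.
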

\begin{proof}
Firstly, by substituting $b=2$ into \eqref{l3}, we obtain $m=0$ or $a=2/\sqrt{3}$. Then by substituting $m=0$ and $a=2/\sqrt{3}$ into \eqref{l2} respectively, we get $a=1$ or $2$, and $m=1$, respectively.

Secondly, we claim that $a<2$ holds. Otherwise, suppose that $a>2$. According to Lemma \ref{perpendicular}, and as shown in Figure \ref{corollary1}, the red dashed line characterizes the equilateral triangle, and $A_1$ is the apex. Then $m_3$ is directly above $A_1$, and $m_4$ is directly above $m_3$. The blue dashed line is the perpendicular bisector of $r_{13}$. The blue region denotes one of the cones, divided by the two blue lines. We can clearly see that $m_4$ and $m_2$ will always lie in the blue region, which implies that it cannot be a central configuration when $a>2$. 

Thirdly, as shown in Figure \ref{b<2}, we assume that the center of the equilateral triangle is $A$. The intersection of $r_{14}$ and its perpendicular bisector is $A_2$, and the intersection of this perpendicular bisector and the symmetry axis is $A_3$. The right triangle $\triangle m_4A_2A_3$ is similar to $\triangle m_4Om_1$. Then we have 
$$\frac{\overline{m_4A_2}}{\overline{m_4O}}=\frac{\overline{m_4A_3}}{\overline{m_4m_1}},$$
which implies $\overline{m_4A_3}=b^2/(2\sqrt{b^2-1})$. 
\begin{itemize}
	\item When $b<2$, $A_3$ is below $A$. Then $m_2$ lies in the grey cone shown in Figure \ref{b<2}. This implies $m_3$ is below $A_3$, namely, i.e., $\overline{m_3O}<\overline{A_3O}<\overline{AO}$. From $\overline{m_3O}<\overline{AO}$, i.e., $\sqrt{a^2-1}<1/\sqrt{3}$, we have $r_{13}=a<\overline{Am_1}=2/\sqrt{3}$. From $\overline{m_3O}<\overline{A_3O}=\overline{m_4O}-\overline{m_4A_3}$, we have $\sqrt{a^2-1}<\sqrt{b^2-1}-b^2/(2\sqrt{b^2-1})$, i.e., $\sqrt{a^2-1}\sqrt{b^2-1}<(b^2-2)/2$, which implies $b>\sqrt{2}$.
	\item Similarly, when $b>2$, $m_2$ is not in the grey cone, and $\overline{m_3O}>\overline{A_3O}>\overline{AO}$, i.e., $\sqrt{a^2-1}>\sqrt{b^2-1}-b^2/(2\sqrt{b^2-1})>1/\sqrt{3}$. From the first and the third term, we have $a>2/\sqrt{3}$. The first inequality implies that $r_{13}>r_{34}$, i.e., $\sqrt{b^2-1}<a+\sqrt{a^2-1}<2+\sqrt{3}$. Then $b<\sqrt{2}(\sqrt{3}+1)$ with $a<2$.
\end{itemize}
\begin{figure}[htbp]
        \begin{minipage}[t]{0.5\textwidth}
        \centering
        \includegraphics[width=\textwidth]{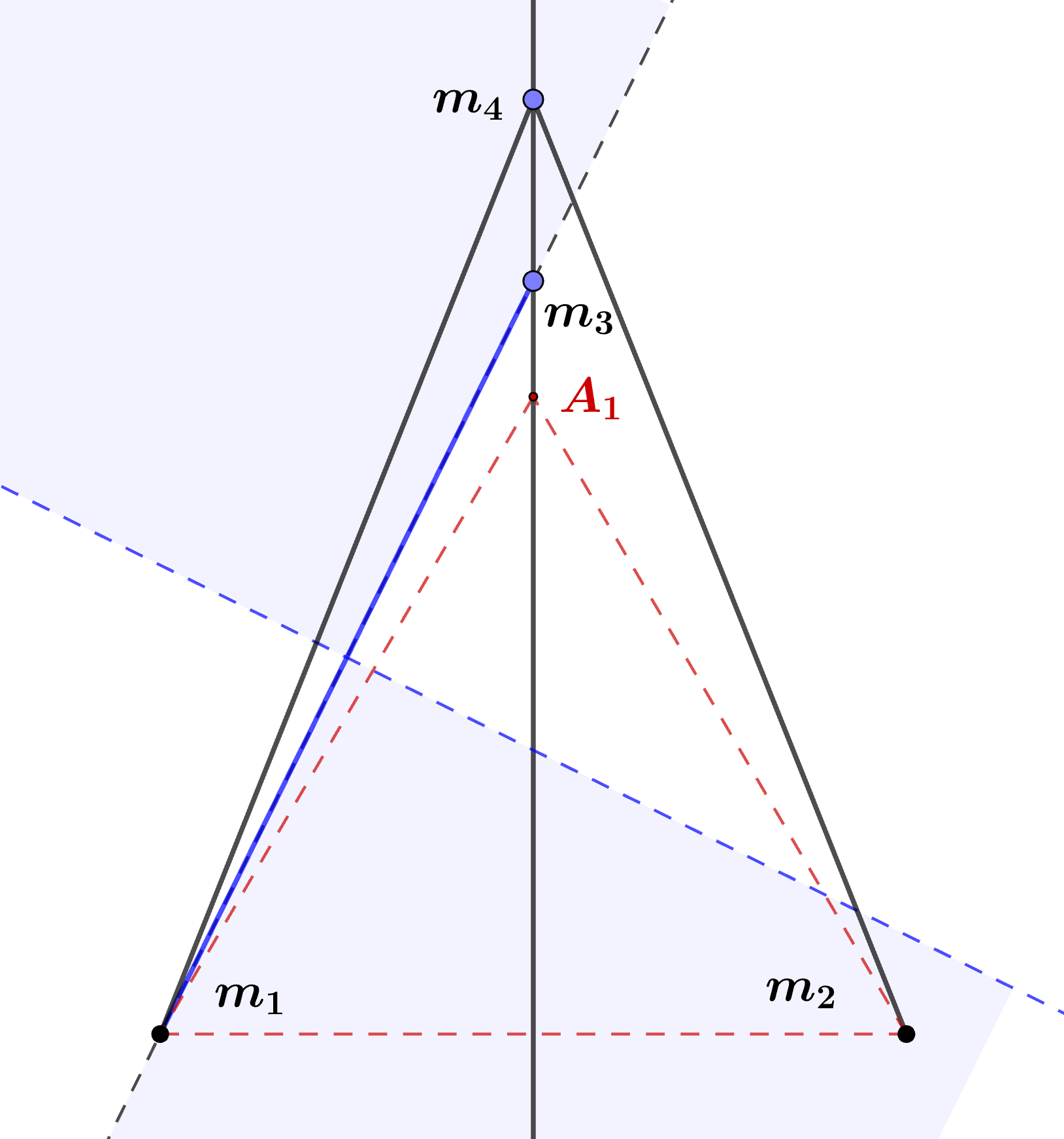}
        \caption{Bisecting $r_{13}$ when $r_{13}=a>2$.}
        \label{corollary1}
    \end{minipage}
      \hfill
    \begin{minipage}[t]{0.49\textwidth}
        \centering
        \includegraphics[width=\textwidth]{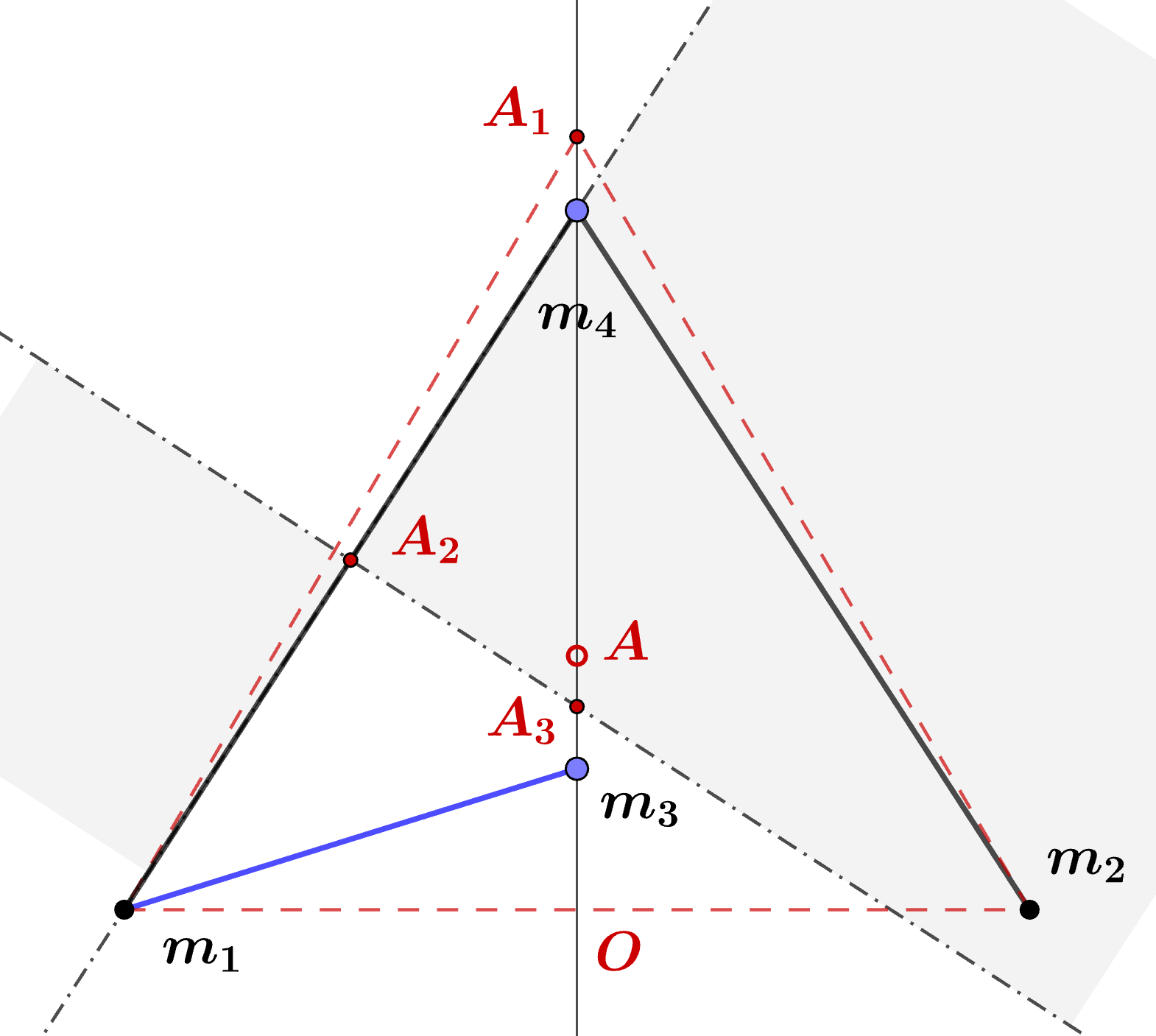}
        \caption{Bisecting $r_{14}$ when $r_{14}=b<2$.}
        \label{b<2}
    \end{minipage}
\end{figure}
\end{proof}
Now let 
\begin{equation}\label{xx1yy1}
\left\{
\begin{aligned}
	x=&\frac{\sqrt{a^2-1} (a^3-8)}{a^6},\\
	x_1=&(\sqrt{a^2-1}-\sqrt{b^2-1})^3+a^3,\\
	y=&\frac{\sqrt{b^2-1} (b^3-8)}{b^6},\\
	y_1=&(\sqrt{a^2-1}-\sqrt{b^2-1})^3+b^3.
\end{aligned}
	\right.
\end{equation}
Then $m$ in \eqref{mintwo} can be written as
\begin{equation}\label{m_in_xy}
m=\frac{w_1}{x_1}y,
\end{equation}
where $w_1=a^3b^4(\sqrt{a^2-1}-\sqrt{b^2-1})^2/4\geq0$. 
The mass-free equation can be written as 
\begin{equation}\label{g=xx1yy1}
xx_1+yy_1=0.
\end{equation}
We claim that the following inequalities hold.
\begin{lemma}\label{xy1}
Suppose that $x,x_1,y,y_1$ are defined in \eqref{xx1yy1}. Then
	\begin{equation}\label{x<0y1>0}
	x<0, \quad y_1>0,
\end{equation}
and 
\begin{equation}\label{partialsign}
\frac{\partial x_1}{\partial b}\leq0,\quad \frac{\partial (y\cdot y_1)}{\partial b}>0
\end{equation}
hold for any $a\in I_A$ and $b\in (\sqrt{2},b_1)\subsetneq \tilde I_B$, where $b_1$ is the only real root in $\tilde I_B$ of the polynomial equation 
\begin{equation}\label{num_of_dyb}
	p(z)=-2 z^5+3 z^3+40 z^2-48=0.
\end{equation}

\end{lemma}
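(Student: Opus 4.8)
The plan is to verify each of the four claimed inequalities in \eqref{x<0y1>0}–\eqref{partialsign} separately, exploiting the constraint $a\in I_A=(1,2)$, $b\in(\sqrt{2},b_1)$ obtained from Corollary \ref{1<a<2}. For $x<0$: since $a\in(1,2)$ we have $a^3-8<0$ while $\sqrt{a^2-1}>0$ and $a^6>0$, so $x=\sqrt{a^2-1}(a^3-8)/a^6<0$ immediately. For $y_1>0$: we must show $(\sqrt{a^2-1}-\sqrt{b^2-1})^3+b^3>0$, equivalently $b^3>(\sqrt{b^2-1}-\sqrt{a^2-1})^3$, i.e. $b>\sqrt{b^2-1}-\sqrt{a^2-1}$ (the cube is monotone). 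This is clear because $b>\sqrt{b^2-1}$ and $\sqrt{a^2-1}>0$. So the first line is essentially free.

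The substance is in \eqref{partialsign}. For $\partial x_1/\partial b\le0$: writing $u=\sqrt{a^2-1}-\sqrt{b^2-1}$, we get $x_1=u^3+a^3$ and $\partial x_1/\partial b = 3u^2\cdot\partial u/\partial b = -3u^2\,b/\sqrt{b^2-1}\le0$, since $b/\sqrt{b^2-1}>0$ and $u^2\ge0$. Again immediate. The only genuinely nontrivial claim is $\partial(y\cdot y_1)/\partial b>0$. Here $y\cdot y_1 = \dfrac{\sqrt{b^2-1}(b^3-8)}{b^6}\bigl((\sqrt{a^2-1}-\sqrt{b^2-1})^3+b^3\bigr)$, which depends on both $a$ and $b$. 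The strategy is to differentiate with respect to $b$, clear the positive denominator $b^7\sqrt{b^2-1}$ (or similar), and reduce the sign question to the positivity of a single expression that is polynomial in $b$, $\sqrt{b^2-1}$, $\sqrt{a^2-1}$. I expect that after this reduction the worst term (the one that could be negative) is controlled by pushing $a$ to its extreme: since $y_1$ and its $b$-derivative depend on $a$ only through $\sqrt{a^2-1}\in(0,\sqrt{3})$ in a monotone fashion, one replaces $\sqrt{a^2-1}$ by the endpoint value $0$ or $\sqrt{3}$ that makes the bound worst, arriving at a one-variable inequality in $b$ on $(\sqrt{2},b_1)$. That one-variable inequality is precisely where the polynomial $p(z)=-2z^5+3z^3+40z^2-48$ enters: $b_1$ is defined as the root of $p$ in $\tilde I_B$, and the claim should boil down to $p(b)>0$ (or $<0$) on the relevant subinterval, which follows by checking the sign of $p$ at the endpoints and noting $p$ has a unique root there.

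The main obstacle is the bookkeeping in the reduction of $\partial(y\cdot y_1)/\partial b>0$: one must expand $\partial_b\bigl(\sqrt{b^2-1}(b^3-8)b^{-6}\bigr)\cdot y_1 + \sqrt{b^2-1}(b^3-8)b^{-6}\cdot\partial_b y_1$, combine over a common (manifestly positive) denominator, and correctly identify which grouping of terms is monotone in $\sqrt{a^2-1}$ so that the $a$-dependence can be eliminated by substituting an endpoint. I would carry this out as follows: (i) dispatch $x<0$, $y_1>0$, and $\partial x_1/\partial b\le0$ in one short paragraph as above; (ii) compute $\partial_b(y y_1)$ symbolically and write it as $N(a,b)/D(b)$ with $D(b)>0$; (iii) show $\partial N/\partial(\sqrt{a^2-1})$ has constant sign on the domain, so $N(a,b)\ge N_{\text{worst}}(b)$ for the extremal $a$; (iv) verify $N_{\text{worst}}(b)>0$ on $(\sqrt{2},b_1)$ by factoring out positive powers of $b$ and reducing to $p(b)>0$, using the endpoint values $p(\sqrt2)$, $p(b_1)=0$ and the uniqueness of the root $b_1$ in $\tilde I_B$ (which in turn follows from a sign/derivative analysis of the quintic $p$). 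If step (iii) does not produce clean monotonicity in $a$, the fallback is to treat $N(a,b)>0$ directly as a two-variable polynomial inequality on the compact box and invoke the interval-arithmetic/Krawczyk machinery of Subsection \ref{IntK} to certify it, exactly as the paper does elsewhere.
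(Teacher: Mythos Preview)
Your treatment of $x<0$, $y_1>0$, and $\partial x_1/\partial b\le 0$ is fine and essentially identical to the paper's.

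The gap is in your primary strategy for $\partial(yy_1)/\partial b>0$. Two concrete problems:

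\emph{First}, the monotonicity-in-$a$ step (your (iii)) does not go through. With $s=\sqrt{a^2-1}$ and $u=s-\sqrt{b^2-1}$ one has $\partial_b y_1=3b^2-3u^2\,b/\sqrt{b^2-1}$, and hence $\partial_s(\partial_b y_1)=-6u\,b/\sqrt{b^2-1}$, which changes sign at $s=\sqrt{b^2-1}$. So $\partial_b y_1$ is \emph{not} monotone in $s$ on $(0,\sqrt3)$; plugging in an endpoint of $s$ is not the worst case, and $\partial_b(yy_1)$ inherits the same difficulty. Your assertion that ``$y_1$ and its $b$-derivative depend on $a$ \dots\ in a monotone fashion'' is therefore false for the derivative.

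\emph{Second}, you have misplaced where $p$ enters. The polynomial $p(b)$ is exactly the numerator of $\partial y/\partial b$, a function of $b$ alone; it is not what remains after eliminating $a$. The restriction $b<b_1$ is imposed precisely so that $\partial y/\partial b>0$.

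The idea the paper actually uses, which your plan is missing, is to split at $b=2$ (the sign change of $y$). For $2<b<b_1$ one has $y>0$, and the paper shows separately that $\partial y/\partial b>0$ (this is $p(b)>0$, established by an elementary sign/derivative analysis of the quintic on $(2,b_1)$) and $\partial y_1/\partial b>0$ (via $b\sqrt{b^2-1}>u^2$); the product rule then gives $\partial_b(yy_1)>0$. For $\sqrt2<b<2$ one has $y<0$, and the product-of-derivatives trick fails; instead the paper multiplies $\partial_b(yy_1)$ by $b^7\sqrt{b^2-1}>0$, splits the result into four pieces $k_1,k_2,k_3,k_4$, checks $k_1,k_2>0$ by inspection, and obtains $k_3+k_4>0$ through a short factorization yielding the factor $(-1-\sqrt{a^2-1}\sqrt{b^2-1})$ times $(b^3-8)<0$. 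The case $b=2$ is immediate since $y=0$ and $\partial y/\partial b>0$ there. No interval arithmetic is used anywhere in this lemma; your fallback would presumably succeed, but it is not the route the paper takes.
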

\begin{proof}
For \eqref{x<0y1>0}, it is easy to see that $x<0$ when $1<a<2$. If $a\leq b$, then from the obtuse triangle $\triangle m_4m_1m_3$ shown in Figure \ref{isotri}, we find that $r_{14}$ is the longest side, which implies $r_{14}>r_{34}$, i.e., $y_1>0$. If $a>b$, $y_1>0$ holds obviously. 

For \eqref{partialsign}, by direct computation, we have 
\begin{equation*}
	\left\{
	\begin{aligned}
		\frac{\partial x_1}{\partial b}=&-\frac{3 b (\sqrt{a^2-1}-\sqrt{b^2-1})^2}{\sqrt{b^2-1}},\\
		\frac{\partial y_1}{\partial b}=& \frac{3 b(b\sqrt{b^2-1}-(\sqrt{a^2-1}-\sqrt{b^2-1})^2)}{\sqrt{b^2-1}},\\
		\frac{\partial y}{\partial b}=&\frac{b^2(b^3-8)+3b^3(b^2-1)-6(b^2-1)(b^3-8)}{b^7\sqrt{b^2-1}}=\frac{p(b)}{b^7\sqrt{b^2-1}}.
	\end{aligned}
	\right.
\end{equation*}
It is easy to see that $\partial x_1/\partial b\leq0$. For the second inequality, we have 
\begin{equation*}
	\frac{\partial (y\cdot y_1)}{\partial b} = \frac{\partial y}{\partial b} \cdot y_1+y\cdot\frac{\partial y_1}{\partial b}.
\end{equation*}
We consider three cases. 

Firstly, when $2<b<\sqrt{2}(\sqrt{3}+1)$, we have $y>0$. We aim to show that $\partial y/\partial b, \partial y_1/\partial b$ are both positive, and this implies $\partial (yy_1)/\partial b>0$. The expression in the numerator of $\partial y_1/\partial b$ satisfies $b\sqrt{b^2-1}-(\sqrt{a^2-1}-\sqrt{b^2-1})^2>\sqrt{b^2-1}^2-(\sqrt{a^2-1}-\sqrt{b^2-1})^2=\sqrt{a^2-1}(2\sqrt{b^2-1}-\sqrt{a^2-1})>0$. 
Meanwhile, for the numerator of $\partial y/\partial b$, without much difficulty, we compute the derivatives of $p(b)$ till the third order. $p'''(b)=18-120b^2<0$ in $(2,\sqrt{2}(\sqrt{3}+1))$, and this implies $p''(b)=80 + 18 b - 40 b^3$ is decreasing in $(2,\sqrt{2}(\sqrt{3}+1))$. We compute $p''(2)=-204$, which implies $p'(b)$ is decreasing in $(2,\sqrt{2}(\sqrt{3}+1))$. In addition, $p'(2.14)>0$ and $p'(2.15)<0$ implies $p(b)$ has a unique maximum in $(2.14,2.15)$. From $p(2.75)>0$ and $p(2.76)<0$ we find that $p(b)$ has a unique zero $b_1\in(2.75,2.76)$. So, for $2<b<b_1$, we have $\partial y/\partial b>0$. 

Secondly, for the case $\sqrt{2}<b<2$, we claim 
\begin{equation*}
\frac{\partial (y\cdot y_1)}{\partial b} \cdot(b^7\sqrt{b^2-1})= (\frac{\partial y}{\partial b} \cdot y_1+y\cdot\frac{\partial y_1}{\partial b})(b^7\sqrt{b^2-1})=k_1+k_2+k_3+k_4>0,
\end{equation*}
where
\begin{equation*}\label{k1234}
	\left\{
	\begin{aligned}
		k_1=&y_1\cdot (3-2b^2)(b^3-8),\\
		k_2=&y_1\cdot (3b^2-3)b^3,\\
		k_3=&y_1\cdot (3-3b^2)(b^3-8),\\
		k_4=&(b(b^2-1)-\sqrt{b^2-1}(\sqrt{b^2-1}-\sqrt{a^2-1})^2)\cdot 3b^2  (b^3-8).
	\end{aligned}
	\right.
\end{equation*}
It is obvious that $k_1$ and $k_2$ are positive. Furthermore we have
\begin{equation*}
	\begin{aligned}
		k_3+k_4=&3(b^3-8)[(-(b^2-1))(b^3-(\sqrt{b^2-1}-\sqrt{a^2-1})^3)+\\
		 &\qquad\qquad b^2\sqrt{b^2-1}(b\sqrt{b^2-1}-(\sqrt{b^2-1}-\sqrt{a^2-1})^2)]\\
		=&3(b^3-8)\sqrt{b^2-1}(\sqrt{b^2-1}-\sqrt{a^2-1})^2(\sqrt{b^2-1}^2-b^2-\sqrt{b^2-1}\sqrt{a^2-1})\\
		=&3(b^3-8)\sqrt{b^2-1}(\sqrt{b^2-1}-\sqrt{a^2-1})^2(-1-\sqrt{a^2-1}\sqrt{b^2-1})>0.
	\end{aligned}
\end{equation*}

For the case $b=2$, by direct computation we have $y=0$ and $\partial y/\partial b>0$, which implies $\partial (yy_1)/\partial b>0$. Together with the two cases above, we complete the proof.
\end{proof}

We denote by $g(a,b)=xx_1+yy_1$.

\begin{lemma}\label{g(a,b)}
For any $a\in I_A$, there exists a unique $b= \hat b(a)$ such that 
\begin{enumerate}
	\item $g(a,b)<0$ for $\sqrt{2}<b<\hat b(a)$,
	\item $g(a,\hat b(a))=0$, and 
	\item $g(a,b)>0$ for $\hat b(a)<b<5/2$.
\end{enumerate}
\end{lemma}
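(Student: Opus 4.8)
\textbf{Proof proposal for Lemma \ref{g(a,b)}.}

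The plan is to fix $a\in I_A=(1,2)$ and study the one-variable function $b\mapsto g(a,b)=xx_1+yy_1$ on the interval $b\in(\sqrt2,5/2)$, showing it is strictly increasing there and changes sign exactly once. First I would record the endpoint behaviour: as $b\downarrow\sqrt2$ the term $y=\sqrt{b^2-1}(b^3-8)/b^6$ is strongly negative (since $b^3-8<0$) while $y_1>0$ by Lemma \ref{xy1}, and $xx_1$ is comparatively controlled because $x<0$ but $x_1$ stays bounded; a short estimate should give $g(a,\sqrt2^+)<0$ uniformly in $a$. At the other end I would check $g(a,5/2)>0$: here $b^3-8=125/8-8>0$ so $y>0$ and $y_1>0$, and one needs $yy_1$ to dominate $|x|x_1=|x|x_1$; since $|x|=\sqrt{a^2-1}(8-a^3)/a^6$ is uniformly bounded on $(1,2)$ and $x_1$ is likewise bounded (it equals $a^3+(\sqrt{a^2-1}-\sqrt{b^2-1})^3$ with the cube term bounded in absolute value), this reduces to an explicit numerical inequality that interval arithmetic will dispatch. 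Together with continuity, the intermediate value theorem then produces a zero; uniqueness and the sign pattern follow once monotonicity is established.

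The core of the argument is showing $\partial g/\partial b>0$ on the whole range $(\sqrt2,5/2)$. Writing $\partial g/\partial b = x\,\partial x_1/\partial b + \partial(yy_1)/\partial b$, Lemma \ref{xy1} already gives $\partial x_1/\partial b\le 0$ together with $x<0$, so the first summand is $\ge 0$; and Lemma \ref{xy1} gives $\partial(yy_1)/\partial b>0$ on $(\sqrt2,b_1)$ with $b_1\in(2.75,2.76)$. Since $5/2<b_1$, the interval $(\sqrt2,5/2)$ lies strictly inside $(\sqrt2,b_1)$, so both pieces have the claimed sign on all of $(\sqrt2,5/2)$ and hence $\partial g/\partial b>0$ throughout. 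This is the decisive point: the choice of the right endpoint $5/2$ is precisely what keeps us inside the region where Lemma \ref{xy1} applies, and the strict positivity of $\partial(yy_1)/\partial b$ there is what upgrades ``a zero exists'' to ``a unique zero with a clean sign change''. Note the claim is stated for $a$ ranging over the full open interval $I_A$, so all of these estimates must be made uniformly in $a$; this is where the computer-assisted interval method of Subsection \ref{IntK} enters, applied to the single variable $b$ with $a$ treated as an interval parameter.

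The main obstacle I anticipate is the endpoint inequality $g(a,5/2)>0$ uniformly over $a\in(1,2)$: near $a=2$ the factor $8-a^3$ degenerates to $0$, which actually helps (it makes $|x|$ small there), but near $a=1$ the factor $\sqrt{a^2-1}$ degenerates instead, which also makes $|x|$ small — so in fact $|x|$ is uniformly small on a neighbourhood of both ends and only needs care in the interior of $(1,2)$, where $x_1$ and the cube term $(\sqrt{a^2-1}-\sqrt{b^2-1})^3$ at $b=5/2$ must be bounded explicitly. Rather than chase these bounds by hand I would subdivide $[1,2]$ into finitely many subintervals, evaluate $g(\,[a],5/2\,)$ and $g(\,[a],\sqrt2\,)$ with interval arithmetic on each, and verify the sign via property \eqref{intervalproperty}; the monotonicity in $b$ then does the rest. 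A secondary check is confirming $g(a,b)$ is actually well defined and smooth on the closed box (no spurious singularities from the denominators $(\sqrt{a^2-1}-\sqrt{b^2-1})$, which is why $x,y,x_1,y_1$ were chosen to be the polynomial/radical combinations that clear that denominator), but this is immediate from the definitions \eqref{xx1yy1}.
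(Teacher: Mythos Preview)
Your proposal is correct and follows essentially the same route as the paper: strict monotonicity of $b\mapsto g(a,b)$ via Lemma \ref{xy1} (using $x<0$, $\partial x_1/\partial b\le 0$, $\partial(yy_1)/\partial b>0$ on $(\sqrt2,5/2)\subset(\sqrt2,b_1)$), combined with endpoint sign checks and the intermediate value theorem. The only minor difference is that the paper verifies $g(a,\sqrt2)<0$ by direct sign inspection of the explicit formula (both $xx_1$ and $yy_1$ turn out negative there, so no comparison is needed), reserving interval arithmetic solely for the inequality $g(a,5/2)>0$.
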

\begin{proof}
On the one hand, $g(a,\sqrt{2})=$
$$\frac{ (\sqrt{2}-4) ((\sqrt{a^2-1}-1)^3+2 \sqrt{2})}{4}+
\frac{(a^3-8) (a^3+(\sqrt{a^2-1}-1)^3) \sqrt{a^2-1}}{a^6}<0,$$
since the two parentheses in each term, i.e., $((\sqrt{a^2-1}-1)^3+2 \sqrt{2})>0$ and $a^3+(\sqrt{a^2-1}-1)^3>0$ hold with $a\in I_A$. 
On the other hand, $g(a,5/2)=$
$$\frac{244 \sqrt{21} ((\sqrt{a^2-1}-\sqrt{21}/2)^3+125/8)}{15625}+\frac{(a^3-8) \sqrt{a^2-1} (a^3+(\sqrt{a^2-1}-\sqrt{21}/2)^3)}{a^6}.$$ 
We apply the computer-assisted interval arithmetic introduced in Section \ref{IntK} to 
$$F=g(a,5/2)=0$$ 
to show rigorously that this single variable equation 
 has no zero when $a$ belongs to the closure of $I_A$, i.e., $ \overline{I_A}=[1,2]$. In addition, by direct computation, $g(3/2,5/2)=$
$$\frac{-15609375 \sqrt{5}+5558625 \sqrt{21}+23836392 \sqrt{105}-230181064}{22781250}>0,$$
which implies $g(a,5/2)>0$ for all $a\in \overline{I_A}$. Hence, by the Intermediate Value Theorem, there exists at least one
$b\in (\sqrt{2},5/2)$ such that $g(a, b) = 0$. 

On the other hand, from Lemma \ref{xy1}, for any $a\in I_A$ and $b\in (\sqrt{2},5/2)$ we have
\begin{equation*}
	\frac{\partial g}{\partial b}=x\cdot \frac{\partial x_1}{\partial b}+\frac{\partial (y\cdot y_1)}{\partial b}>0,
\end{equation*}
 since $5/2<b_1$. Hence, from the Implicit Function Theorem, $b=\hat b(a)$ is a differential function for $a\in I_A$. The set 
$$\Gamma=\{(a,b)\vert g(a,\hat b(a))=0,a\in I_A\}$$ 
is a smooth curve on the $(a,b)$-plane; see the black dotted-dashed curve in Figure \ref{g=0}. 
\end{proof}
\begin{lemma}\label{coro2}
	$g(a,b)=0$ has no zero in $\overline{ I_A}\times [5/2,\sqrt{2}(\sqrt{3}+1)]$. 
\end{lemma}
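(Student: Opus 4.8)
The plan is to prove the stronger statement that $g(a,b)=xx_1+yy_1>0$ throughout the closed rectangle $\overline{I_A}\times[5/2,\sqrt2(\sqrt3+1)]$, which at once rules out any zero. Write $r=r_{34}=\sqrt{b^2-1}-\sqrt{a^2-1}>0$, so that $x_1=a^3-r^3$ and $y_1=b^3-r^3$. First I cut off the sub-rectangle on which $g$ is monotone in $b$: by Lemma~\ref{g(a,b)} we have $g(a,5/2)>0$ for every $a\in\overline{I_A}$, and by Lemma~\ref{xy1} (together with $x\le0$, which holds since $a\le2$) we have $\partial g/\partial b=x\,\partial x_1/\partial b+\partial(yy_1)/\partial b>0$ on $\overline{I_A}\times(\sqrt2,b_1)$. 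Since $b_1\in(2.75,2.76)$ we have $b_1>5/2$, and therefore $g(a,b)\ge g(a,5/2)>0$ on $\overline{I_A}\times[5/2,b_1]$.

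It remains to treat $\overline{I_A}\times[b_1,\sqrt2(\sqrt3+1)]$, where $b>b_1>2$ gives $y>0$, $x\le0$, and $y_1=b^3-r^3>0$ (since $r<\sqrt{b^2-1}<b$). I split on the sign of $x_1$. If $x_1\le0$ then $xx_1\ge0$, so $g\ge yy_1>0$. If $x_1>0$ and $y\ge|x|$, then $b^3-r^3>a^3-r^3=x_1>0$ (as $b>a$), so $g=y(b^3-r^3)-|x|(a^3-r^3)\ge|x|(b^3-a^3)\ge0$, which is strict unless $|x|=0$, in which case $g=yy_1>0$. The only delicate case is $x_1>0$ and $y<|x|$: from $g=yb^3-|x|a^3+r^3(|x|-y)$ one gets $g>yb^3-|x|a^3=\phi(b)-h(a)$, where $\phi(b)=\sqrt{b^2-1}\,(b^3-8)/b^3$ is increasing for $b>2$ and $h(a)=\sqrt{a^2-1}\,(8-a^3)/a^3$. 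Since $b\ge b_1>11/4$, $\phi(b)\ge\phi(11/4)=\tfrac{819\sqrt{105}}{5324}>1.576$, so $g>0$ whenever $h(a)<1.576$. Now $h$ is unimodal on $(1,2)$ — its derivative has the sign of $24-16a^2-a^5$, a strictly decreasing function with a single zero in $(1,1.2)$ — so $\{a\in\overline{I_A}:h(a)\ge1.576\}$ is a short interval $[a_1,a_2]\subset(1.03,1.49)$; and on this range the hypothesis $x_1>0$, i.e.\ $\sqrt{b^2-1}<a+\sqrt{a^2-1}$, combined with $b\ge b_1$ forces $a>a^\ast=\tfrac12(\tau+\tau^{-1})$ with $\tau=\sqrt{b_1^2-1}>\sqrt{6.5625}$, hence $a^\ast>1.476$, and then $\sqrt{b^2-1}<a_2+\sqrt{a_2^2-1}$ forces $b<2.79$. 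Thus every undecided point lies in the tiny box $[a^\ast,a_2]\times[b_1,2.79]\subset[1.476,1.49]\times[2.75,2.79]$, on which $r$ is large, so $x_1$ is small; a direct estimate there ($|x|<0.52$, $x_1<0.22$, $yy_1>1.2$) gives $g=yy_1-|x|x_1>1>0$. Equivalently, one may just apply the interval arithmetic of Subsection~\ref{IntK} on this last small box to certify $0\notin g([\,\cdot\,])$.

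I expect the main obstacle to be exactly the case $x_1>0$, $y<|x|$ of the second step: there the negative piece $xx_1$ and the positive piece $yy_1$ of $g$ are genuinely of the same order of magnitude, so no single-term estimate suffices, and the substance of the proof is to combine the bound $g>\phi(b)-h(a)$ with the geometric constraint $x_1>0$ and the unimodality of $h$ in order to confine the remaining parameter values to a rectangle small enough that a trivial bound — or one call to the interval method — finishes the job.
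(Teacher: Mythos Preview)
Your argument is correct and takes a genuinely different route from the paper. The paper proves Lemma~\ref{coro2} by pure brute-force interval arithmetic: it partitions $\overline{I_A}\times[5/2,\sqrt2(\sqrt3+1)]$ into $10^4$ equal boxes and certifies $0\notin g([\text{box}])$ on every one of them. Your approach is largely analytical. In Step~1 you exploit the monotonicity $\partial_b g>0$ from Lemma~\ref{xy1} (valid up to $b_1>5/2$), together with $g(a,5/2)>0$, to dispose of $\overline{I_A}\times[5/2,b_1]$; in Step~2 you decompose according to the signs and relative sizes of $x,x_1,y$, obtain in the only nontrivial sub-case the pointwise bound $g>\phi(b)-h(a)$, and combine the unimodality of $h$ with the geometric constraint $x_1>0$ (which forces $a+\sqrt{a^2-1}>\sqrt{b_1^2-1}$) to confine the residual set to the tiny rectangle $[1.476,1.49]\times[2.75,2.79]$, where elementary bounds on $|x|,x_1,yy_1$ finish. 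Two remarks: first, the inequality $g(a,5/2)>0$ that launches your Step~1 is established inside the \emph{proof} of Lemma~\ref{g(a,b)} (not in its statement) and itself relies on a one-variable interval check, so your argument is not entirely computer-free; second, your numerical margins are quite tight (for instance $h(1.49)\approx1.567$ versus $\phi(11/4)\approx1.576$), so the explicit constants deserve careful verification. What your approach buys is structural insight into why $g>0$ and a reduction of the interval computation from $10^4$ two-dimensional boxes to at most one small one; what the paper's approach buys is a uniform, short, and mechanical verification with no delicate case analysis.
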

\begin{proof}
We split each one-dimensional interval into 100 equal parts and obtain $10^4$ subintervals. We check whether possible zeros exist in each subinterval. For example, for one of the subintervals $\theta=[1,1.01]\times [2.5,2.5+(\sqrt{2}(\sqrt{3}+1)-2.5)/100]$, we compute the interval value of $g(a,b)$ on this subinterval and obtain 
$$g([1,1.01],[2.5,2.5+(\sqrt{2}(\sqrt{3}+1)-2.5)/100])\subsetneq [0.232,11.623]\not\ni0.$$ 
Then, from the property \eqref{intervalproperty} we know that $0\notin R(g(a,b);\theta)$, which implies $g(a,b)$ has no zero on $\theta$. Similarly, we handle the remaining subintervals. Finally, we derive the result.
\end{proof}

\begin{figure}[htbp]
\centering
        \includegraphics[width=\linewidth]{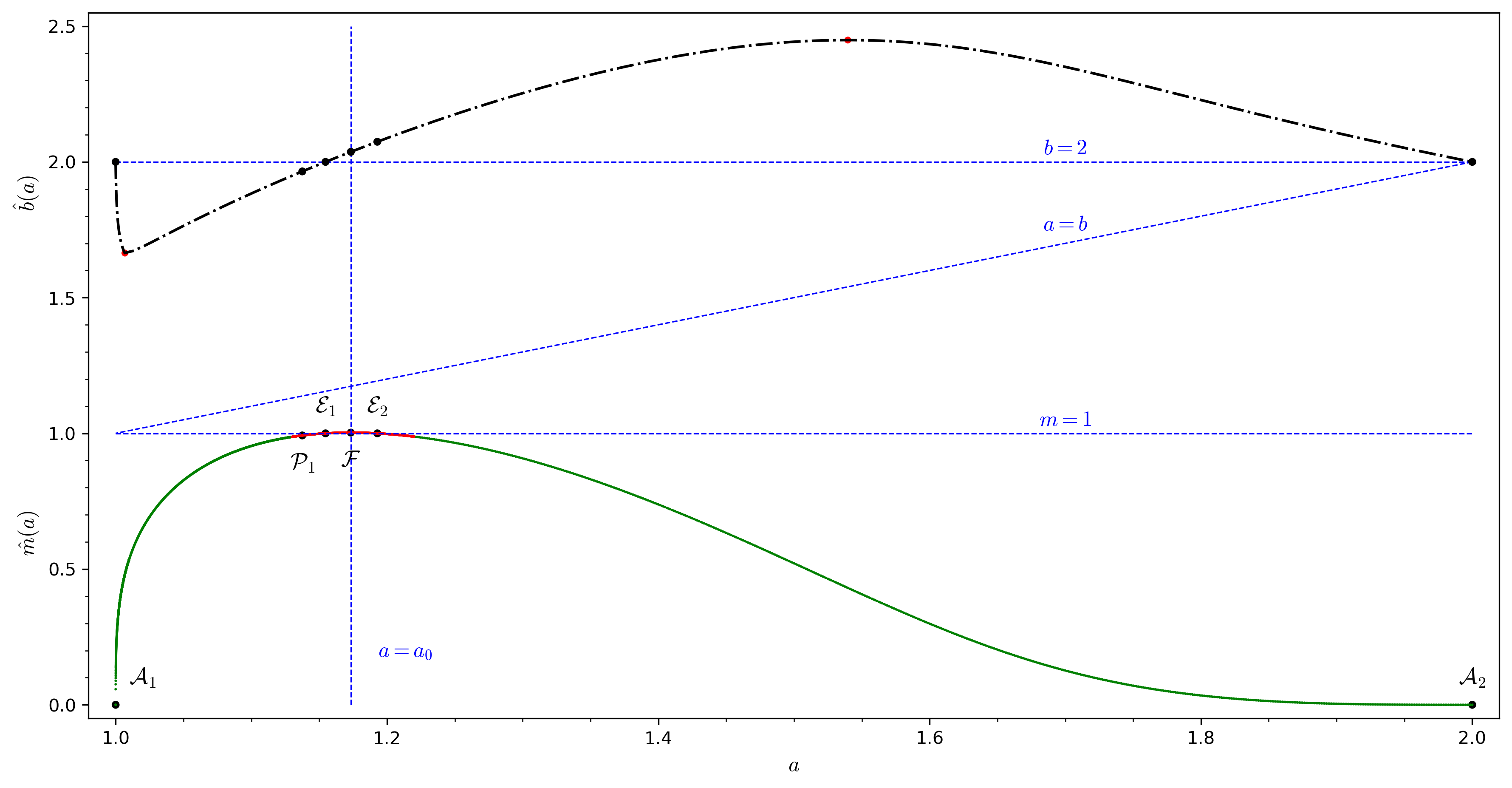}
        \caption{The black dashed-dotted curve represents the function $ b = \hat{b}(a) $, and the solid green curve represents $ m = \hat{m}(a) $, both defined for $ a \in I_A $. The minimum and maximum points of $ \hat{b}(a) $ are highlighted in red, located at approximately $ (1.0068269818055548, 1.6641309857549297) $ and $ (1.5397067078739939865, 2.4488397355312008965) $, respectively. The maximum point of $ \hat{m}(a) $, denoted $ (a_0, m_0) $, is approximately $ (1.1733802447932032924, 1.00271332903708271708) $. Specific central configurations, denoted by $\mathcal{A}_1,\mathcal{A}_2,\mathcal{P}_1,\mathcal{F},\mathcal{E}_1,\mathcal{E}_2$, corresponding to those in Figure \ref{bifurcation1}, Figure \ref{bifurcation2}, and Table \ref{values}, are labeled on the $ \hat{m}(a) $ curve with black dots. Their counterparts on the $ \hat{b}(a)$ curve are also indicated. An enlarged view of a portion of the green curve highlighted in red is presented in Figure \ref{bifurcation1}. The dashed blue lines are auxiliary lines, also labeled in the figure.}
        \label{g=0}
  \end{figure}
\begin{proof}[The proof of Theorem \ref{theo1}]
From Proposition \ref{proposition1} we know that for a symmetric concave kite central configuration with two pairs of equal masses, one pair should lie on the base, as shown in Figure \ref{isotri}.
From Lemma \ref{g(a,b)} we know that $b=\hat b(a)$ is uniquely determined by $a\in I_A$. 
By substituting $b=\hat b(a)$ into the expression of $m$ in \eqref{mintwo} we obtain
\begin{equation*}
m=m(a,\hat b(a))=\hat m(a)=-\frac{\sqrt{a^2-1} (a^3-8) \hat b^3 (\sqrt{a^2-1}-\sqrt{\hat b^2-1})^2}{4 a^3 ((\sqrt{a^2-1}-\sqrt{\hat b^2-1})^3+\hat b^3)}>0,
\end{equation*}
which implies that $\hat m(a)$ is a positive differential function with respect to the only variable $a\in I_A$. 

In conclusion, for any $a\in I_A$, $b=\hat b(a)$ and $m=\hat m(a)$ are uniquely determined successively. Hence, the concave kite central configurations in the planar 4-body problem with two pairs of equal masses, as shown in Figure \ref{isotri1}, form a single-parameter family with the parameter $a\in I_A$.
\end{proof}

\subsection{\texorpdfstring{The mass ratio $m=\hat m(a)$}{The mass ratio}}\label{massratio}
In this section, we prove Theorem \ref{theo2}. 
Using the computer-assisted interval arithmetic introduced in Section \ref{IntK}, we show rigorously that the smooth function $\hat m(a)$ possesses a unique maximum $m=m_0$ when $a\in I_A$. 
We use $I_B=(\sqrt{2},5/2)$ instead of $\tilde I_B$ according to Lemma \ref{coro2}. We denote by $ D=I_A\times I_B$.
\begin{proof}[The proof of Theorem \ref{theo2}]
Noticing that
$$\frac{d \hat m(a)}{da}=\frac{\partial m}{\partial a}-\frac{\partial  m}{\partial b}\cdot \frac{\frac{\partial g}{\partial a}}{\frac{\partial g}{\partial b}}.$$ 
We consider the equation $F=(f_1,g)=0$, where
\begin{equation}\label{uniquedm}
\left\{
	\begin{aligned}
		0=&\frac{\partial m}{\partial a}\cdot \frac{\partial g}{\partial b}-\frac{\partial m}{\partial b}\cdot \frac{\partial g}{\partial a}=f_1(a,b),\\
		0=&g(a,b).
	\end{aligned}
	\right.
\end{equation}
We aim to show $F(a,b)=0$ has a unique solution in the interior of $D$.
In fact, after excluding most of the interval pieces, we lock a small interval $D_0=[1.1,1.2]\times [2,2.1]$ containing possible zeros. The domain $D_0$ is first partitioned by dividing each dimensional interval into 100 equal parts. Each resulting subinterval is then evaluated with the Krawczyk operator from the interval arithmetic program, and all subintervals found to contain no zeros are excluded. Finally, we find the unique solution $(a_0,b_0)$ of \eqref{uniquedm} in
\begin{equation}\label{uniquesol}
\left\{
	\begin{aligned}
 a_0=&1.1733802447932032924212633685?\\
 b_0= &2.036986393189520563373721634?\\
	\end{aligned}
	\right.,
\end{equation}
and the corresponding mass is $m=m_0=1.00271332903708271708084572?$.
The question mark at the end denotes a standard floating-point number (with a small uncertainty) in SageMath.
In addition, from Lemma \ref{g(a,b)} we know that $\partial g/\partial b>0$ for $(a,b)\in D$, which implies that $d\hat m/da$ has the same sign as $f_1$. From Corollary \ref{1<a<2} we know that the point $(2/\sqrt{3},2)$ is a solution of $g(a,b)=0$. Then by direct computation, we have $f_1(2/\sqrt{3},2)=6(2\sqrt{3}-3)>0$, and this implies that $\hat m(a)$ increases in $(1,a_0)$ and decreases in $(a_0,2)$, since $2/\sqrt{3}<a_0$. Hence, $a_0$ is the unique maximum of $\hat m(a)$ in $I_A$. One can see the green curve in Figure \ref{g=0} for illustration. 

Furthermore, we consider the limits of $\hat m(a)$ when $a$ approaches the two endpoints of $I_A$.
Since $\lim\limits_{a\to 1^+}\hat b(a)=2$, and from the expression of $m$ in \eqref{m_in_xy}, we find $\lim\limits_{a\to1^+}x_1=1-3\sqrt{3}\neq0$. Similarly we have $\lim\limits_{a\to 2^-}\hat b(a)=2$ and $\lim\limits_{a\to2^-}x_1=8\neq0$. Then, by direct computation, we have 
\begin{equation*}
	\lim\limits_{a\to1^+} \hat m(a)=0,\quad \lim\limits_{a\to2^-} \hat m(a)=0, 
\end{equation*}
and this implies that $\hat m(a)$ can be continued to $0$ at $a=1$ and $a=2$ respectively, coinciding with the result in Corollary \ref{1<a<2}. In other words, $\hat m(a)$ continues on $\overline{I_A}=[1,2]$. 
Then the following claims hold directly
\begin{enumerate}
	\item When $\tilde m\in (0,m_0)$, the equation $\hat m(a)=\tilde m$ has two solutions, and there are exactly two concave kite central configurations with two pairs of equal masses. 
	\item When $\tilde m =m_0$, the equation $\hat m(a)=\tilde m$ has a unique solution, corresponding to a unique concave kite central configuration. 
	\item When $\tilde m >m_0$, no concave kite central configurations with two pairs of equal masses exists at all. 
    \item Furthermore, if $\tilde m=0$ is admissible, the equation $\hat m(a)=0$ has two solutions corresponding to $\mathcal{A}_1$ and $\mathcal{A}_2$ respectively, as shown in the green curve in Figure \ref{g=0}, which also correspond to the case $s_1$ and the case $sc_2$ respectively in \cite{corbera2014}.
\end{enumerate}
\end{proof}
Figure \ref{isotri1} shows the solutions for different $m$.
\begin{figure}[htbp]
    \centering
    \begin{minipage}{0.45\textwidth}
     \vspace{42pt}
        \includegraphics[width=\linewidth]{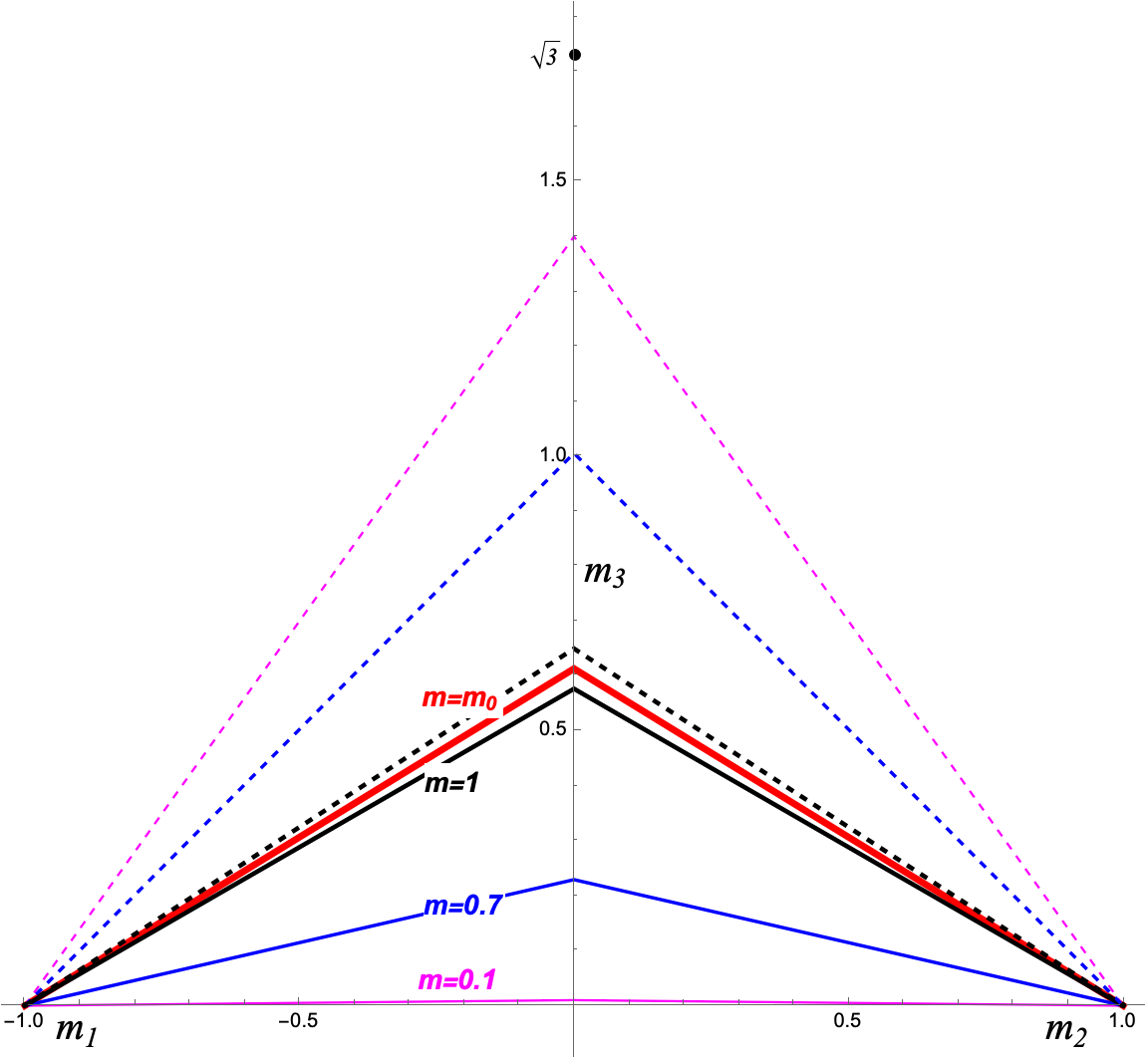} 
        \label{isosceles1}
    \end{minipage}
    \hfill 
    \begin{minipage}{0.477\textwidth}
      \includegraphics[width=\linewidth]{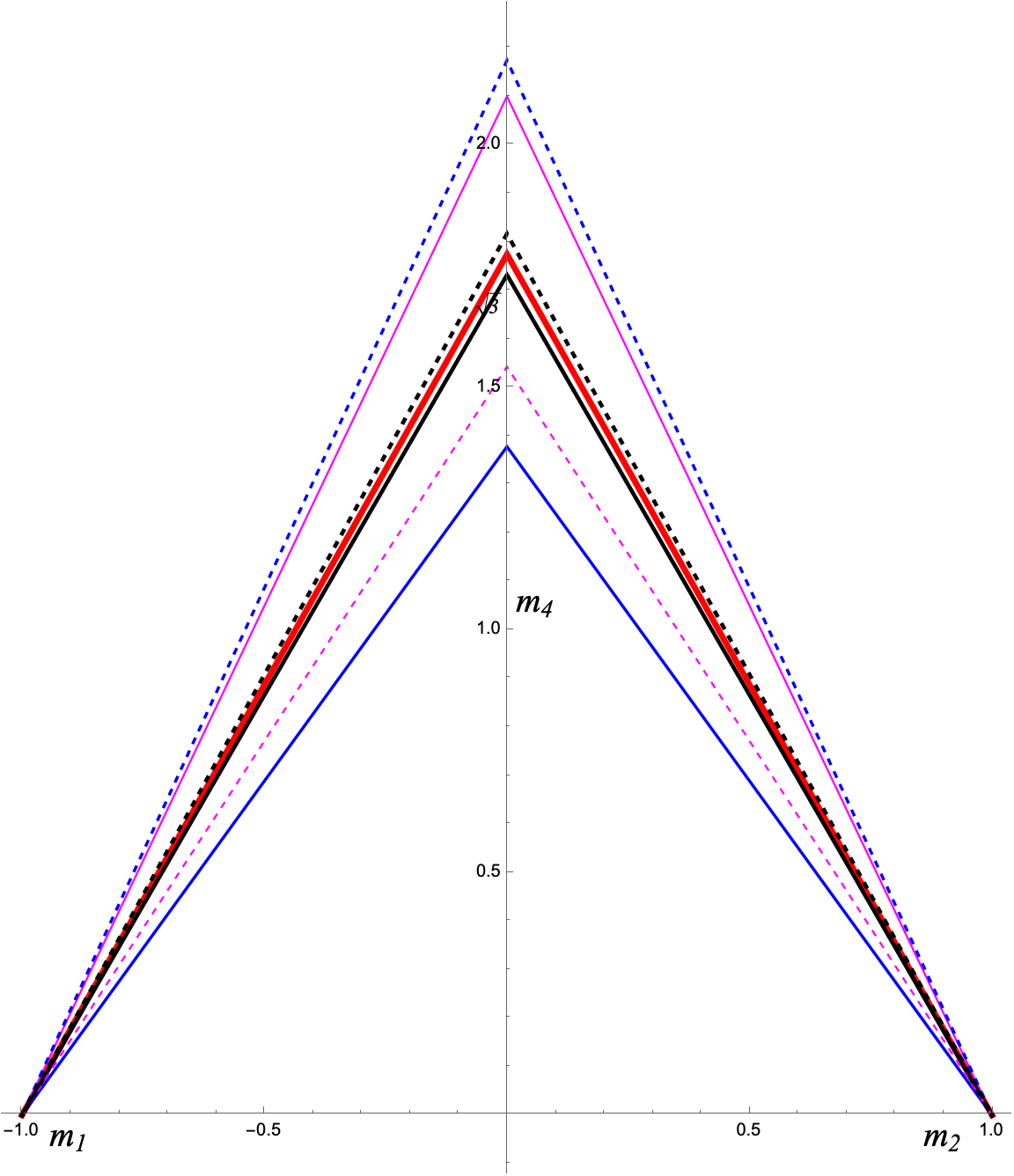} %
        \label{isosceles2}
    \end{minipage}
     \caption{Some examples of the concave kite central configurations with two pairs of equal masses. The left and right figures show the positions of masses $m_3$ and $m_4$, respectively. In the left figure, different colors represent different mass values of $m_3$, and the dashed and solid lines of the same color denote the two distinct solutions, respectively.
      The right figure uses the same color and line scheme to denote the corresponding positions of $m_4$ for each mass value. For example, the red lines correspond to the unique solution where $m = m_0$. The black dashed and solid lines denote the two distinct solutions for the equal mass case, respectively.}
     \label{isotri1}
\end{figure}
For example, the black lines (solid and dashed) represent the two central configurations with four equal masses, where the solid lines denote an equilateral triangle and the dashed lines an isosceles triangle. The blue lines (both solid and dashed-dotted) represent the two cases with a mass ratio of $m=0.7$. The solid red lines describe the unique solution when $m=m_0$.

\subsection{Bifurcations in different spaces}\label{anabifur}
\subsubsection{Bifurcations in the reduced subspace}
In this section, we first discuss the bifurcation with the symmetric condition in our settings. We wonder about the possible bifurcations of the reduced equations \eqref{l2} and \eqref{l3} with respect to $r=(a,b)$ as the variable and $m$ as the parameter, i.e., 
\begin{equation*}\label{G=0}
	\tilde G(r,m)=(g_1,g_2)=0.
\end{equation*}
Now we rewrite $g_1=0$ and $g_2=0$ as $ g_3=m-h_1(a,b)=0$ and $g_4=m-h_2(a,b)=0$, where
\begin{equation*}
	\left\{
	\begin{aligned}
		h_1 (a,b)=& -\frac{b^3\sqrt{a^2-1} (a^3-8)  (\sqrt{a^2-1}-\sqrt{b^2-1})^2}{4 a^3 (b^3+(\sqrt{a^2-1}-\sqrt{b^2-1})^3)},\\
		h_2(a,b)=&\frac{a^3\sqrt{b^2-1} (b^3-8)  (\sqrt{b^2-1}-\sqrt{a^2-1})^2}{4 b^3 (a^3+(\sqrt{b^2-1}-\sqrt{a^2-1})^3)}.
	\end{aligned}
	\right.
\end{equation*}
We turn to consider the equation
\begin{equation*}
	G(r,m)=(g_3,g_4)=0.
\end{equation*}
Noticing that $\tilde r=(2/\sqrt{3},2)$ is a singularity, or a discontinuity point of $g_4$, i.e., the two iterated limits at $\tilde r$ are $m$ and $m-1/(2\sqrt{3})$ respectively. This implies $\tilde r$ is not a solution for $G=0$. We say the equations $\tilde G=0$ and $G=0$ are equivalent on $\hat D=D\backslash \{\tilde r\}$. Hence, the bifurcations of the two equations are the same.
The Jacobian of $G(r,m)$ with respect to $r$ is
\begin{equation*}
	J_G=\begin{bmatrix}\label{jc}
		j_{11}&j_{12}\\
		j_{21}&j_{22}
	\end{bmatrix}
	=
	\begin{bmatrix}
		\frac{\partial g_3}{\partial a}&\frac{\partial g_3}{\partial b}\\
		\frac{\partial g_4}{\partial a}&\frac{\partial g_4}{\partial b}
	\end{bmatrix}=-\begin{bmatrix}
		\frac{\partial h_1}{\partial a}&\frac{\partial h_1}{\partial b}\\
		\frac{\partial h_2}{\partial a}&\frac{\partial  h_2}{\partial b}
	\end{bmatrix}.
\end{equation*}
We denote by 
\begin{equation*}
	f_2(a,b)=\frac{\partial h_1}{\partial a}\cdot \frac{\partial h_2}{\partial b}-\frac{\partial h_1}{\partial b}\cdot \frac{\partial h_2}{\partial a},
\end{equation*}
Then the determinant of $J_G$ is $\det J_G=f_2$.
To find all the possible bifurcation points, we consider the equation 
\begin{equation}\label{f2g}
	H(r)=(f_2,g)=0.
\end{equation}

Firstly, we claim that the unique solution of the equation \eqref{uniquedm}, i.e., $r_0=(a_0,b_0)\in \hat D$ in \eqref{uniquesol} is also a solution for \eqref{f2g}. 
In fact, from $F(r_0,m_0)=0$, we have 
$$f_1(r_0)=0\Leftrightarrow \frac{dh_1}{da}\vert_{a=a_0}=0,$$
since $h_1(a,b)$ is just the expression of $m$ in \eqref{mintwo}, satisfying $h_1(a,\hat b(a))=\hat m(a)$.
This implies 
$$\frac{dh_2}{da}\vert_{a=a_0}=0,$$
since from \eqref{m_in_xy} and \eqref{g=xx1yy1} we have $h_1=w_1\cdot y/x_1\equiv-w_1\cdot x/y_1=h_2$. 
Hence
$$\frac{\frac{\partial h_1}{\partial a}}{\frac{\partial h_1}{\partial b}}=\frac{\frac{\partial g}{\partial a}}{\frac{\partial g}{\partial b}}=\frac{\frac{\partial h_2}{\partial a}}{\frac{\partial h_2}{\partial b}}$$
holds for $r=r_0$, which leads to $\det J_G(r_0)=f_2(r_0)=0$. 

Secondly, we apply the interval arithmetic program to \eqref{f2g} to find zeros on the domain $\hat D$. We note that to avoid the singularity during the calculation, we need to remove the denominator of $f_2$. In other words, we use 
\begin{equation}\label{f2'g}
	\tilde H(r)=(\tilde f_2,g)=0
\end{equation}
in the program on the closed interval $D$ to instead, where $\tilde f_2$ is the numerator of $f_2$. After excluding most of the interval pieces on $D$, we lock on a small piece near each zero. In the same way, we split each small interval into smaller enough pieces, with the long side of each rectangle no more than $10^{-8}$. Finally, we get the only two solutions for \eqref{f2'g}, and they are exactly  $r_0$ and $\tilde r$. One can see Figure \ref{3partial} for illustration. One of the solutions, namely, $\tilde r$, can be excluded directly, since it is not a solution for $G=0$. Therefore, $r_0$ is the only zero for \eqref{f2g}.
 \begin{figure}[htbp]
\begin{center}
\includegraphics[width=1\linewidth]{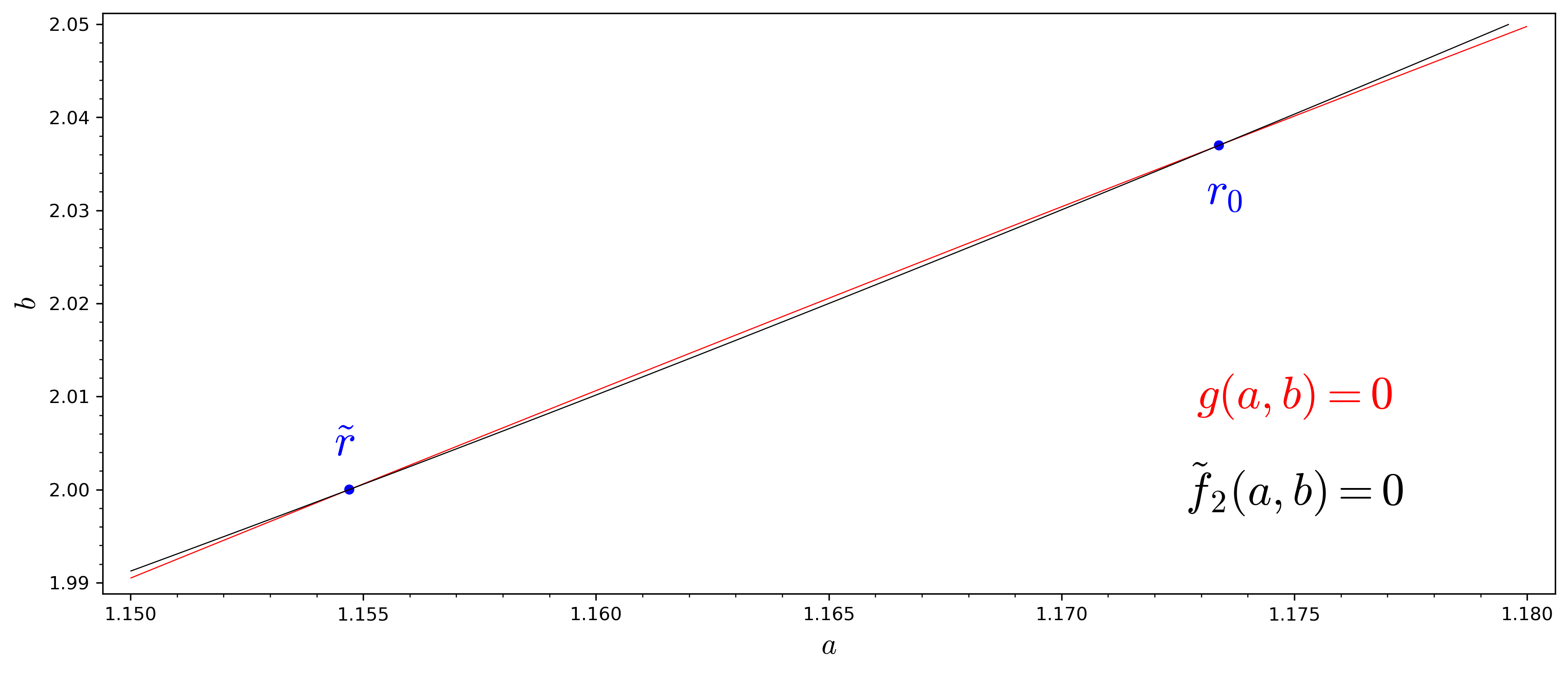}
\caption{The curves of $\tilde f_2(a,b)=0$ and $g(a,b)=0$ on the $(a,b)$-plane with $a\in [1.15,1.18]$. The only two intersections are $r_0=(a_0,b_0)$ and $\tilde r=(2/\sqrt{3},2)$.}
\label{3partial}
\end{center}
\end{figure}

With Theorem \ref{typeofbifur} in Section \ref{bifurcations}, we do the computations with interval arithmetic to determine the bifurcation type of $r_0$. 
The eigenvectors of the eigenvalue $0$ corresponding to $J_G$ and $J_G^T$ can be chosen as 
$$v=[-j_{12},j_{11}]^T \text{ and } w=[-j_{21},j_{11}]^T$$
respectively, since $J_G v=0$ and $J_G^Tw=0$. The partial derivatives of $G(r,m)$ with respect to $m$ is $G_m=[1,1]^T$. Substituting the value of $(r_0,m_0)$ into $J_G$ we have 
\begin{equation*}
	J_G(r_0,m_0)=\begin{bmatrix}
		4.5172058916474534136160228?&  -2.3231832749879904304948019?  \\
		231.5225618448226740599212?&
		-119.07124810379195739208953?
	\end{bmatrix}. 
\end{equation*}
Then, by direct computation, we obtain
\begin{equation*}
\left\{
	\begin{aligned}
		w^T\cdot G_m(r_0,m_0)=&-j_{21}(r_0,m_0)+j_{11}(r_0,m_0)\\
		=&-227.0053559531752206463052?\not\ni0,\\
		w^TD^2C(r_0,m_0)(v,v)=&w^T\cdot[D^2g_3(v,v),D^2g_4(v,v)](r_0,m_0)\\
		=&-18222.741196439596271921?\not\ni0,
	\end{aligned}
\right. 
\end{equation*}
This implies that $(r_0,m_0)$ is a bifurcation point of a fold type. When we perturb from the two equal mass points $\mathcal{E}_1$ and $\mathcal{E}_2$ in Figure \ref{bifurcation1} by increasing $m$ from 1, two symmetric central configurations coalesce into one. This solution cannot be continued further, since there is no symmetric central configuration for $m>m_0$, according to Theorem \ref{theo2}. 
 \begin{figure}[htbp]
\centering
\includegraphics[width=1\linewidth]{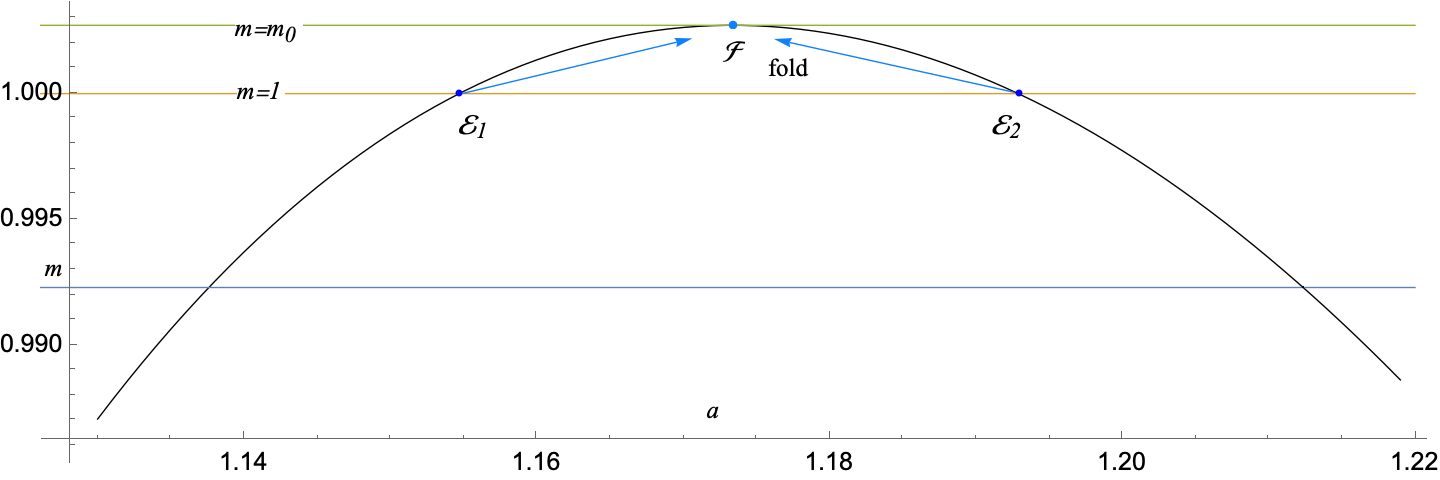}
\caption{We amplify a section on the curve $\hat m(a)$ around $m_0$, which is denoted in red of the green curve in Figure \ref{g=0}, to illustrate the bifurcations intuitively. By increasing $m$ from 1, $\mathcal{E}_1$ and $\mathcal{E}_2$ coalesce. The bifurcation occurs at $\mathcal{F}$ when $m$ reaches $m_0$. It is the only bifurcation point of the equation $G=0$. It is a fold type.}
\label{bifurcation1}
\end{figure}

 \begin{figure}[htbp]
\centering
\includegraphics[width=1\linewidth]{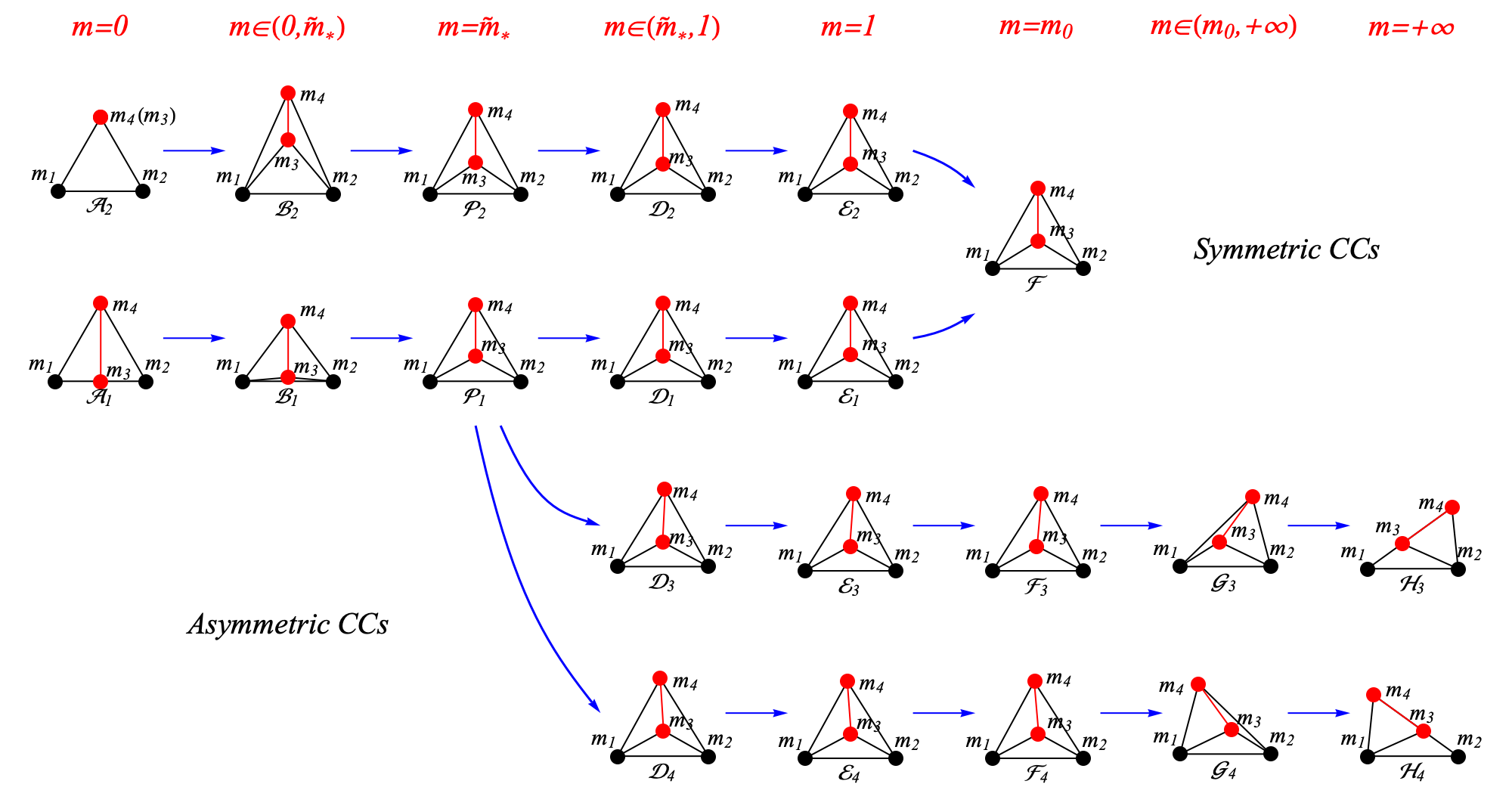}
\caption{Bifurcations of the concave 4-body central configurations with two pairs of equal masses, including both symmetric and asymmetric cases. We always set $m_1=m_2=1 $ and $ m_3=m_4=m $. With $m$ increasing from $0$ to $+\infty$, two bifurcation points are identified: at $\mathcal{P}_1$ when $m=\tilde m_\ast$, asymmetric configurations emerge, while at $\mathcal{F}$ when $m=m_0$, symmetric configurations cease to exist.}
\label{bifurcation2}
\end{figure}

 \subsubsection{Bifurcations in the whole planar 4-body configuration space}
  \counterwithin{table}{section}
\begin{table}[htbp]
\centering
\tiny
\begin{tabular}{@{}cc|c c@{}}
\toprule
\textbf{$m$} & \textbf{CCs} & \textbf{$q_3$} & \textbf{$q_4$} \\
\midrule
\multirow{2}{*}{0} & $\mathcal{A}_1$ & $(0,0)$ & $(0,\sqrt{3})$ \\
 &  $\mathcal{A}_2$ & $(0,\sqrt{3})$ & $(0,\sqrt{3})$\\
 \cline{1-4}
\multirow{2}{*}{$0.4$} & $\mathcal{B}_1$ & $(0, 0.08545589736279056172317148415)$ & $(0, 1.3373300780354451360145241818)$ \\
 &  $\mathcal{B}_2$ & $(0, 1.1886118075960024612098337898)$ & $(0, 2.2345238899447153200315721359)$ \\
\cline{1-4}
\multirow{2}{*}{$\tilde m_\ast$} &  \textcolor{blue}{$\mathcal{P}_1$} & $(0, 0.542337524224352560399170458)$ & $(0, 1.691445724232170729416786631)$ \\
 &  $\mathcal{P}_2$ & $(0, 0.685433331653046748968775972)$ & $(0, 1.857925542719253462435156388)$ \\
\cline{1-4}
\multirow{4}{*}{0.996} & $\mathcal{D}_1$ & $(0, 0.556434137865205678847558365)$ & $(0, 1.707753032824028314888992510)$ \\
 & $\mathcal{D}_2$& $(0, 0.671315754589387265540263247)$ & $(0, 1.841579560968450541481421594)$ \\
 & $\mathcal{D}_3$ & $(-0.009906700029766125, 0.5423813608539542)$ &$(0.04880076418676227, 1.6905216297662835)$ \\
 &$\mathcal{D}_4$ & $(0.009906700029766125, 0.5423813608539542)$ &$(-0.04880076418676227, 1.6905216297662835)$ \\
\cline{1-4}
\multirow{4}{*}{1} & $\mathcal{E}_1$& $(0,1/\sqrt{3})$ & $(0,\sqrt{3})$ \\
 & $\mathcal{E}_2$ & $(0, 0.6503784729520715)$ & $(0, 1.817239394723845)$ \\
 & $\mathcal{E}_3$ & $(-0.014277689766976964, 0.5424284291298985)$ & $(0.07027749578541109, 1.6895283608200573)$ \\
 & $\mathcal{E}_4$ &  $(0.014277689766976964, 0.5424284291298985)$ & $(-0.07027749578541109, 1.6895283608200573)$ \\
\cline{1-4}
\multirow{3}{*}{$m_0$} & \textcolor{blue}{$\mathcal{F}$} & $(0, 0.6138576372995270328)$ & $(0, 1.7746305435327241844)$ \\
 & $\mathcal{F}_3$ & $(-0.016582744680756933, 0.5424600163486574)$ & $(0.08158057491289805, 1.688861170203973)$ \\
 & $\mathcal{F}_4$ & $(0.016582744680756933, 0.5424600163486574)$ & $(-0.08158057491289805, 1.688861170203973)$\\
\cline{1-4}
\multirow{2}{*}{2} &$\mathcal{G}_3$ & $(-0.13477940502391195, 0.5485425650707881)$ & $(0.5904621819032521, 1.5460132316491686)$ \\
 & $\mathcal{G}_4$ & $(0.13477940502391195, 0.5485425650707881)$ & $(-0.5904621819032521, 1.5460132316491686)$ \\
\cline{1-4}
\multirow{2}{*}{$+\infty$} & $\mathcal{H}_3$ & $(-0.23430343925991237, 0.5533288328036454)$ & $(0.8620451062243932, 1.3456025507794094)$ \\
 & $\mathcal{H}_4$ & $(0.23430343925991237, 0.5533288328036454)$ & $(-0.8620451062243932, 1.3456025507794094)$ \\
\bottomrule
\end{tabular}
\caption{Numerical values of $q_3$ and $q_4$ derived from the interval arithmetic and the Krawczyk operator with different $m$ corresponding to the central configurations in Figure \ref{bifurcation2}. We always set $m_1=m_2=1$, $m_3=m_4=m$, and $q_1=(-1,0),q_2=(1,0)$.}
\label{values}
\end{table}
As mentioned in the introduction section, the bifurcations of the concave central configurations with two pairs of equal masses in the whole planar 4-body configuration space, have already been considered in \cite{rusu2016} (Sec. 5.3). 
With one pair set to be 1, and the other pair $m$, the limiting cases, namely, when$m$ goes to $0$ or $\infty$, have already been discussed in \cite{corbera2014}, with five non-collision ones labeled $s_1-s_5$ and three collision ones labeled $sc_1-sc_3$.
To better characterize the bifurcations, we fix the symmetry axis along the perpendicular bisector of $r_{12}$, i.e., coinciding with the $y$-axis, and set $m_1=m_2=1$, $ m_3=m_4=m$, and $q_1=(-1,0),q_2=(1,0)$. With the specific symmetry axis, this framework provides a clear and complete picture of the bifurcations for both symmetric and asymmetric cases, based on part of the numerical analysis in \cites{corbera2014,rusu2016}, as well as the rigorous numerical solutions obtain from the interval arithmetic and the Krawczyk operator, as shown in Figure \ref{bifurcation2} and Table \ref{values}. 
We note in advance that the equal mass
cases $\mathcal{E}_2,\mathcal{E}_3$, and $\mathcal{E}_4$ share the same convex hull,  corresponding to the isosceles triangle central configuration found in \cite{albouy1996}, and their bases are $r_{12}, r_{24}$ and  $r_{14}$ respectively, as shown in Figure \ref{bifurcation2}. In other words, both $\mathcal{E}_3$ and $\mathcal{E}_4$ possess $Z_2$ symmetry, but they are not symmetric with respect to this fixed, specific symmetry axis under our previous settings. Furthermore, by perturbing from $\mathcal{E}_3$ and $\mathcal{E}_4$, we find that most central configurations in these two branches do not possess any symmetry.
That's the main reason we put them in the asymmetric part.
With $m$ increasing from $0$ to $+\infty$, and with the same notations $\tilde m_\ast\approx0.992299447752385347449845$ and $\tilde m_{\ast\ast}\approx0.997294013195487928197522$ used in \cite{rusu2016}, two bifurcation points are identified: at $\mathcal{P}_1$ when $m=\tilde m_\ast$, asymmetric configurations emerge, while at $\mathcal{F}$ when $m=m_0=1/\tilde m_{\ast\ast}$, symmetric configurations cease to exist. Both bifurcation points correspond to those found in \cite{rusu2016}. 
The limiting cases $\mathcal{A}_1$ and $\mathcal{A}_2$ with $m=0$ correpond to $s_1$ and $sc_2$ in \cite{corbera2014}, as well as $\mathcal{H}_3$ and $\mathcal{H}_4$ with $m=\infty$ (or equivalently $m_3=m_4=1$ and $m_1=m_2=0$), correpond to $s_5$ in \cite{corbera2014} when considering the symmetry.

\begin{rema}
The above analysis reveals that a degenerate critical point (such as $\mathcal{P}_1$ in Figure \ref{bifurcation2}) may become non-degenerate within a reduced subspace, causing the bifurcation to disappear when symmetry is taken into account. In contrast, the point $\mathcal{F}$  continues to act as a bifurcation point in this specific subspace.
\end{rema}

\section*{Acknowledgements}
We sincerely thank the anonymous referees for their thorough reading and for their insightful comments and constructive suggestions, all of which have helped us significantly improve the quality of this paper.



\end{document}